\title{Random walk with heterogeneous sojourn time}
\author{Jaywan Chung}
\address[Jaywan Chung]{Energy Conversion Research Center, Korea Electrotechnology Research Institute, 12, Jeongiui-gil, Changwon-si, Gyeongsangnam-do, 51543, Korea}
\email{jchung@keri.re.kr}
\author{Yong-Jung Kim}
\address[Yong-Jung Kim]{ Department of Mathematical Sciences, KAIST,e 291 Daehak-ro, Yuseong-gu, Daejeon, 34141, Korea}
\email{yongkim@kaist.edu}
\author{Min-Gi Lee}
\address[Min-Gi Lee]{ Department of Mathematics, Kyungpook National University, 80 Daehak-ro, Buk-gu, Daegu, 41566, Korea}
\email{leem@knu.ac.kr}
\def\eps{\epsilon}
\def\tri{\triangle}
\def\R{\mathbb{R}}
\def\Z{\mathbb{Z}}
\def\erf{\mathrm{erf}}
\def\erfc{\mathrm{erfc}}
\newtheorem{theorem}{Theorem}[section]
\newtheorem{lemma}{Lemma}[section]
\newtheorem{corollary}{Corollary}[section]
\newtheorem{proposition}{Proposition}[section]
\newtheorem{definition}{Definition}[section]
\numberwithin{equation}{section}
\begin{document}
\maketitle
\begin{abstract}
We introduce a discrete-time random walk model on a one-dimensional lattice with a nonconstant sojourn time and prove that the discrete density converges to a solution of a continuum diffusion equation.
Our random walk model is not Markovian due to the heterogeneity in the sojourn time, in contrast to a random walk model with a nonconstant walk length.
We derive a Markovian process by choosing appropriate subindexes of the time-space grid points, and then show the convergence of its discrete density through the parabolic-scale limit. We also find the Green's function of the continuum diffusion equation and present three Monte Carlo simulations to validate the random walk model and the diffusion equation.
\end{abstract}

\section{Introduction}

The purpose of this paper is to introduce a random walk model with a nonconstant sojourn time and to find the corresponding diffusion equation. We will see that the heterogeneity in the sojourn time behaves differently from the one in the walk length, and thus the diffusivity coefficient alone cannot explain a heterogeneous diffusion phenomenon. To see this, we consider a position-jump model, where the particles jump the distance of the walk length $\tri x$ with a random direction every sojourn time $\tri t$. The random walk model with parameters,
\begin{equation}\label{1.1}
\tri x=1\quad\text{and}\quad \tri t=\tau(x):=
\begin{cases}
1,\quad x<0,\\
2,\quad x\ge0,
\end{cases}
\end{equation}
is one of the simplest models with a nonconstant sojourn time. In this situation, the particles in the region $x<0$ walk twice while those in the region $x>0$ walk once. This simplest case can serve as a building block to construct a general case. In this setup, the heterogeneity in the walk length $\tri x$ is forgotten, and we focus on the heterogeneity in the sojourn time $\tri t$. We will work with a discrete-time model on a lattice corresponding to \eqref{1.1} and study the corresponding discrete stochastic process.


Einstein's random walk model \cite{011} for the Brownian motion is a discrete-time model with a constant sojourn time $\tri t$. The walk length follows a normal distribution with a constant deviation which corresponds to $\tri x$ of our case. Then, the diffusivity is given by
\begin{equation}\label{D}
D=\frac{(\tri x)^2}{2n\tri t},
\end{equation}
where $n$ is the spatial dimension. If $\tri x$ and $\tri t$ are of microscopic scale, the particle density $u(t,x)$ of macroscopic-scale variables satisfies the diffusion equation
\begin{equation}\label{HeatE}
u_t=D\Delta u,
\end{equation}
where $\Delta u:=\sum_{j=1}^n\partial^2_{x_i} u$ is the Laplace operator. More precisely, if we take the diffusion limit as $\tri x \to 0$ while keeping the ratio in \eqref{D} fixed, the particle density satisfies the diffusion equation \eqref{HeatE}.

After Einstein's random walk theory, a huge systematic development was made and much of it was based on Smoluchowski's independent work \cite{012}. On the other hand, numerous attempts have been made to use random walk theory to explain the nonconstant steady states of the thermal diffusion phenomenon observed in 1856 by Ludwig \cite{016} and later by Soret \cite{017}. Non-constant steady states are also observed in many other instances of physical and biological problems, and heterogeneous diffusion models have been developed to explain them (see \cite{009,018,019}). If the environment is heterogeneous due to nonconstant temperature, heterogeneous media, geometric structure, etc., we can incorporate the heterogeneity into the random walk model by assuming a spatially varying walk length and sojourn time. Let
\begin{equation}\label{heterogeneity}
\tri x\big|_{\text{at } x}=\ell(x)\quad\text{and}\quad \tri t\big|_{\text{at } x}=\tau(x).
\end{equation}
Then, the relation in \eqref{D} still gives the heterogeneous diffusivity, 
\begin{equation}\label{D(x)}
D(x)=\frac{\ell^2(x)}{2n\tau(x)}.
\end{equation}
However, the diffusion equation \eqref{HeatE} is no longer valid. Among the infinitely many possibilities, three are well known:
\begin{eqnarray}
\label{Fick} u_t &=& \nabla\cdot(D(x)\nabla u), \\
\label{Wereide} u_t &=& \nabla\cdot(\sqrt{D(x)}\nabla(\sqrt{D(x)} u)), \\
\label{Chapman} u_t &=& \Delta (D(x)u),
\end{eqnarray}
which are called Fick \cite{001}, Wereide \cite{002}, and Chapman \cite{003}, respectively. If the diffusivity $D$ is constant, the three equations are identical to \eqref{HeatE}. If not, they are all different. Furthermore, none of the three diffusion laws above are universal laws, because as we will see below, the heterogeneities in $\tri t$ and $\tri x$ behave differently, making it impossible to explain a diffusion phenomenon with diffusivity alone.

Note that the heterogeneity of the parameters given in \eqref{heterogeneity} does not determine a diffusion equation. It is determined by the way the spatial heterogeneity of the parameters is taken into account. For example, suppose that a particle jumps from a departure point $x$ to an arrival point $y$. Consider the case where the walk length is given by
\begin{equation}\label{reference_x}
\tri x:=|y-x|=\ell(ay+(1-a)x),\quad 0\le a\le 1.
\end{equation}
If $a=0$, \eqref{reference_x} indicates that the heterogeneity is taken from the departure point $x$. Similarly, if $a=1$, the heterogeneity is taken from the arrival point $y$. We can take any \emph{reference point} between $x$ and $y$ by choosing $a\in[0,1]$. Alfaro et al. \cite{004} formally derived a diffusion equation
\begin{equation}\label{General}
u_t=\nabla\cdot(D^q\nabla(D^{1-q} u))
\end{equation}
in the context of non-local diffusion with $q=2a$. Kim and Lim \cite{005,006} showed in a different setting that the particle density converges to the solution of \eqref{General} when $q=a$. The three cases of \eqref{Fick}--\eqref{Chapman} are special cases of \eqref{General} when $q=1,0.5$, and $0$, respectively. These results are obtained under the assumption that the sojourn time is constant, and we can ask the same question under a heterogeneous sojourn time $\tri t$.

Langevin \cite{013} obtained the same result as Einstein in a simpler way by introducing a stochastic differential equation, now called the Langevin equation. His idea has been developed as a fundamental tool for the analysis of random phenomena (see \cite{014,015}). In a heterogeneous environment, the Langevin equation becomes a nonlinear stochastic differential equation,
\begin{equation}\label{SDE}
\dot x(t) = k(x) + \ell(x)W(t),
\end{equation}
where $x(t)$ is the position of a particle and $W(t)$ is a stationary stochastic process. The function $k(x)$ controls a heterogeneous advection phenomenon and is zero in our context. The function $\ell(x)$ is the deviation when $W(t)$ is the Gaussian white noise and plays the role of the walk length in our model. The diffusivity is given by the same formula \eqref{D(x)} with $\tau(x)\equiv1$. The equation \eqref{SDE} requires an appropriate interpretation of taking $x$ in $\ell(x)$. It\^o's interpretation is to take $x$ before the pulse and the probability density function satisfies \eqref{Chapman}. Stratonovich's interpretation is to take the midpoint between the two points before and after the pulse, and \eqref{Wereide} is satisfied. In other words, the diffusion equation for the stochastic differential equation \eqref{SDE} is also determined by the choice of the reference point in the same way (see \cite{022,021} for further discussion). However, there is no component in the Langevin equation \eqref{SDE} that corresponds to the heterogeneity in the sojourn time $\tri t$.

To study the heterogeneity in the sojourn time $\tri t$, we consider a one-dimensional case where the walk length is a constant $\ell(x)\equiv1$ and the sojourn time $\tau(x)$ is a step function:
\begin{equation}\label{1.13}
\tri x=\eps,\qquad
\tri t=\eps^2\tau(x),\quad \tau(x)=
\begin{cases}
1 & \text{if $x<0$,}\\
2 & \text{if $x>0$},
\end{cases}
\end{equation}
where the small parameter $\eps>0$ is introduced to take the diffusion limit as $\eps\to0$. We will show that, as $\eps\to0$, the particle density converges to a solution of
\begin{equation}\label{ux1}
u_t=\frac{1}{2}\Big({u\over \tau(x)}\Big)_{xx},\quad u(0,x)=u_0(x),
\end{equation}
where $u_0$ is the initial value.

The heterogeneity in the sojourn time plays a different role than in the walk length; the corresponding diffusion equation is independent of the reference point. To be precise, even if we take the sojourn time as 
\begin{equation}\label{reference_t}
\tri t=\tau(by+(1-b)x),\quad 0\le b\le 1,
\end{equation}
the corresponding diffusion equation is independent of the parameter $b\in[0,1]$ (see \cite[Lemma 5.1]{006} and the Monte Carlo simulations in Figure \ref{fig4}). Therefore, it is sufficient to consider just one case to cover them all. 

To our knowledge, the diffusion limit of a random walk system as a discrete stochastic process with a nonconstant sojourn time has never been obtained. For continuous-time cases, there is a Montroll-Weiss theory of continuous-time random walk (CTRW) \cite{025,026} and we will explain briefly the theory below. If $\tri t$ is not spatially constant, the resulting random walk system is not Markovian  and most of tools for a Markov chain are not applicable. What is done so far is to interpret the departing rate as the reciprocal of the sojourn time, i.e.,
\begin{equation}\label{gamma}
\gamma(x):=\frac{\tau_0}{\tau(x)},\quad 0<\tau_0\le\inf_x \tau(x).
\end{equation}
Then, one may take a discrete-time model
\[
u(t+\tau_0,x)=\frac{\gamma(x-\tri x)}{2} u(t,x-\tri x)+\frac{\gamma(x+\tri x)}{2} u(t,x+\tri x) +(1-\gamma(x))u(t,x).
\] 
If the reference point of the departing rate $\gamma$ is the departure point as above, the diffusion equation turns into
\[
u_t=\frac{1}{2\tau_0}(\gamma(x)u)_{xx},
\]
which is equivalent to \eqref{ux1} under the relation \eqref{gamma}. If not, the resulting diffusion equation is not equivalent to \eqref{ux1}. The departing rate can be treated in both continuous-time and discrete-time models (see \cite{005,006}). The concept of the departing rate has been widely used without linking it to the sojourn time (see \cite{008,020,007}). Note that in regard to \eqref{gamma} our main result supports the idea that the reference point of the departing rate should be the departure point, i.e., of the It\^o type.

The continuous-time random walk model of Montroll and Weiss \cite{025} also has been widely studied and has contributed to understanding phenomena of chemical kinetics in crystalline solids or  porous media \cite{025,026,024,027}. In this CTRW theory, the sojourn time of a particle that have just made a jump at time $t$ to arrive at $x$ is a random variable. By imposing the probability density function of the random variable dependent on $x$ \cite{026}, the heterogeneity can be entailed in the model. A CTRW process corresponds to its so called {\it master equation}, which is better known as the Kolmogorov equation in mathematics community. The prescribed probability density function of the sojourn time characterizes the equation and it can be in widely general form, including non-local ones.

The rest of the paper is organized as follows. In Section \ref{sect.2}, a discrete-time random walk model with the parameters in \eqref{1.13} is introduced. Since the sojourn time is heterogeneous, the obtained random walk system is not Markovian. The main effort of this section is to transform it into a Markovian. In Section \ref{sect.3}, we construct a Lipschitz continuous interpolation of the discrete solution of the random walk problem, and find uniform estimates using difference quotients. These estimates give a convergent subsequence of the interpolation. The convergence to the weak solution is finally proved in Section \ref{sect.4}. In Section \ref{sect.5}, the Green's function for the diffusion equation \eqref{ux1} is obtained explicitly. It is compared with the discrete random walk system. Three Monte Carlo simulations are given in Section \ref{sect.6} showing the behavior of the Green's function and steady states. In addition, it is illustrated that the steady states are independent of the reference point in the sojourn time. The computation codes for these numerical simulations are given in the Appendix.

\section{From non-Markovian to Markovian}\label{sect.2}

We consider a one-dimensional random walk system where a particle jumps to one of the two adjacent grid points with the equal probability ${1\over2}$. We assume that the grid points are equally spaced with a mesh size $\tri x=\eps$ and that the sojourn time $\tri t$ is constant in each of the two divided regions, $x<0$ and $x>0$. We take
\begin{equation}\label{tau(x)}
\tri x=\eps,\quad
\tri t=\eps^2\tau(x),\quad
 \tau(x) =
\left\{\begin{array}{ll}
        1 & \text{if } x<0,\\
        2 & \text{if } x>0.
        \end{array}\right.
\end{equation}
Note that the step function $\tau(x)$ is to denote the spatial heterogeneity in $\tri t$, and $\eps$ and $\eps^2$ are to denote the microscopic-scale dimensions of the walk length and the sojourn time, respectively. The diffusion limit will be taken as $\eps\to0$. In this case, the diffusivity is fixed as
\[
D(x)=\frac{(\tri x)^2}{2\tri t}=\frac{1}{2\tau(x)}.
\]
The convergence proof of the paper depends on the fact that the sojourn time in one region is exactly twice long as in the other. 

The random walk motion can be interpreted in two ways. The first way is to consider $\tri t$ as the sojourn time, which means that a particle stays at a position for the time period $\tri t$ and then immediately jumps to the next position. We may take the heterogeneity in the sojourn time as
\[
\tri t=\tau(by+(1-b)x),\quad 0\le b\le 1,
\]
where $x$ is the departure point and $y$ is the arrival point. If $b=0$, the sojourn time is independent of the arrival point and if $b\ne0$, it is not. However, since the resulting diffusion limit is independent of the choice of $b$ (see \cite[Lemma 5.1]{006}), we may choose any.

The second way is to consider $\tri t$ as the travel time, which means that a particle departs for the next position immediately after arriving and then arrives at the next position after the travel time $\tri t$. The concept of travel time is more restrictive, since the travel time from a position $x$ to a position $y$ should be equal to the travel time from $y$ to $x$. This property holds if the reference point of the sojourn time is taken as the midpoint $\frac{x+y}{2}$, i.e., $b=0.5$ or
\begin{equation}\label{b=0.5}
\tri t=\tau((y+x)/2).
\end{equation}
This choice is made for our convenience in the following recursive relations; a different choice may result in different relations and difficulties. However, the resulting diffusion equation should be the same.

We denote by $p_n^j$ the probability that a particle is located at $x=x^j$ at time $t=t_n$. We denote the space discretization by superscripts and the time discretization by subscripts. The time-space grid points used in this paper are
$$
(t_n,x^j):=(n\eps^2,j\eps),\qquad n\in\Z_+,\ j\in\Z,
$$
where the time step size $\eps^2$ and the space mesh width $\eps$ are small real numbers, and $n$ and $j$ are integers. Then, due to the heterogeneity of the travel time $\tri t$ in \eqref{b=0.5}, the probability $p_n^j$ is given by averaging two values at the two adjacent grid points at two moments, $t_{n-1}$ or $t_{n-2}$, depending on $j$, which is
\begin{equation} \label{RW1}
    p^j_n =\frac{1}{2}\times
        \begin{cases}
        p^{j-1} _{n-1} + p ^{j+1} _{n-1} & \text{if } j<0,\\
        p^{-1} _{n-1} + p ^{1} _{n-2} & \text{if } j=0,\\
        p^{j-1} _{n-2} + p ^{j+1} _{n-2} & \text{if } j>0.
        \end{cases}
\end{equation}
Note that this process is not a Markov chain, since $p_n$ is determined by two previous steps, $p_{n-1}$ and $p_{n-2}$. By applying these relations twice, we obtain
\[
p_n^j=\frac{1}{4}\times
\begin{cases}
p_{n-2}^{j-2}+2p_{n-2}^{j}+p_{n-2}^{j+2} & \mbox{if } j<-1,\\
p_{n-2}^{-3}+2p_{n-2}^{-1}+p_{n-3}^1 & \mbox{if } j=-1,\\
p_{n-2}^{-2}+p_{n-2}^{0}+p_{n-4}^{0}+p_{n-4}^{2} & \mbox{if } j=0,\\
p_{n-3}^{-1}+2p_{n-4}^{1}+p_{n-4}^3 & \mbox{if }j=1,\\
p_{n-4}^{j-2}+2p_{n-4}^{j}+p_{n-4}^{j+2} & \mbox{if }j>1.
\end{cases}
\]
The next step is to rewrite the relation with elements in the $(n-2)$th step only. Since $p_{n-4}^{0}+p_{n-4}^{2}=2p_{n-2}^1$, $p_{n-3}^{-1}+2p_{n-4}^{1}+p_{n-4}^3=2p_{n-2}^0+2p_{n-2}^2$, and $p_{n-4}^{j-2}+2p_{n-4}^{j}+p_{n-4}^{j+2}=2p_{n-2}^{j-1}+2p_{n-2}^{j+1}$,
the above is written as
\begin{equation} \label{2.4}
p_n^j=\frac{1}{4}\times
\begin{cases}
p_{n-2}^{j-2}+2p_{n-2}^{j}+p_{n-2}^{j+2}, & \mbox{if } j<-1,\\
p_{n-2}^{-3}+2p_{n-2}^{-1}+\boxed{p_{n-3}^1}, & \mbox{if } j=-1,\\
p_{n-2}^{-2}+p_{n-2}^{0}+2p_{n-2}^1, & \mbox{if } j=0,\\
2p_{n-2}^0+2p_{n-2}^2, & \mbox{if }j=1,\\
2p_{n-2}^{j-1}+2p_{n-2}^{j+1}, & \mbox{otherwise.}
\end{cases}
\end{equation}
The boxed term is the only one from the $(n-3)$th step and others are from the $(n-2)$th step. However, we cannot reduce it to the ones from the $(n-2)$th step. What we are going to do is to take only even-numbered grid points for the domain $x<0$ and even-numbered time steps $n=2\ell$. Then, \eqref{2.4} is written as
\begin{equation} \label{RW2}
    p^{j}_{2\ell} =\frac{1}{4}\times\begin{cases}
        p^{j-2} _{2(\ell-1)}+2p ^{j} _{2(\ell-1)}+p^{j+2}_{2(\ell-1)}, & \text{if $j<0$ is even,}\\
        p^{-2} _{2(\ell-1)}+p^{0} _{2(\ell-1)}+2p ^{1} _{2(\ell-1)}, & \text{if $j=0$,}\\
        2p^{j-1} _{2(\ell-1)} + 2p ^{j+1} _{2(\ell-1)}, & \text{if } j>0.
        \end{cases}
\end{equation}
The advantage of this formula is that the odd-numbered time indexes do not appear at all, so it is a Markov chain. Furthermore, the odd-numbered space indexes are completely forgotten in the region $x<0$. In other words, the even-numbered time indexes, the even-numbered space indexes in the region $x<0$, and all the space indexes in $x\ge0$ form an independent system. This is a special property when the sojourn time in one region is exactly twice as long as in the other. Below we will construct a discrete random walk system using these grid points.

Note that $p_n^j$ is a probability, not density, which is not appropriate to take the diffusion limit as $\eps\to0$ since it converges to 0 as $\eps\to0$. We should consider the \emph{probability density} which will be denoted by $v_n^j$ (or by $v(t,x)$ for the limit). We assume that the initial probability density distribution $v_0$ satisfies
\begin{equation} \label{initial}
\begin{cases}
 v_0 \in L^1( \R)\cap L^\infty(\R),\quad v_0\ge0,\\
 \int_ \R v_0(x)\, dx = 1, \\
 w_0:=\frac{v_0}{\tau} \in C^2(\R).
 \end{cases}
\end{equation}
Since a classical solution of the problem \eqref{ux1} requires the regularity of $\frac{v(x)}{\tau(x)}$, the smoothness assumption of the initial value should be given to $\frac{v_0(x)}{ \tau(x)}$, not to $v_0(x)$. Based on the relation \eqref{RW2}, we introduce a non-uniform space grids $(y^j)_{j\in\Z}$ given by
\[
y^j:=x^{2j}\quad\text{if}\quad j<0,\quad\text{and}\quad
y^j:=x^{j}\quad\text{if}\quad j\ge0.
\]
Then, the mesh sizes are
\[
\Delta y^j:=y^{j+1}-y^j=\begin{cases}
                          2\eps , & \mbox{if } j<0,\\
                          \eps, & \mbox{if } j\ge0.
                        \end{cases}
\]
The initial probability density function $v_0(x)$ is discretized as
\begin{equation*}\label{p0k}
\tilde p_{0}^{j} := \int_{y^j}^{y^{j+1}} v_0(x)\,dx=\begin{cases}
                          \int_{y^j}^{y^{j}+2\eps } v_0(x)\,dx, & \mbox{if } j<0,\\
                          \int_{y^j}^{y^{j}+\eps } v_0(x)\,dx, & \mbox{if } j\ge0.
                        \end{cases}
\end{equation*}
Then, $\sum_{j}\tilde p_{0}^{j}=1$. Let $\tilde p_{\ell}^j := p_{2\ell}^{2j}$ if $j<0$ and $\tilde p_{\ell}^j := p_{2\ell}^{j}$ if $j\ge0$. Then, \eqref{RW2} is written as
\begin{equation} \label{RW3}
    \tilde p^{j}_{\ell} =\frac{1}{4}\times\begin{cases}
        \tilde p^{j-1} _{\ell-1}+2\tilde p ^{j} _{\ell-1}+\tilde p^{j+1}_{\ell-1}, & \text{if $j<0$,}\\
        \tilde p^{-1} _{\ell-1}+\tilde p^{0} _{\ell-1}+2\tilde p ^{1} _{\ell-1}, & \text{if $j=0$,}\\
        2\tilde p^{j-1} _{\ell-1} + 2\tilde p ^{j+1} _{\ell-1},
        & \text{if } j>0.
        \end{cases}
\end{equation}
The total mass is preserved and $\sum_{j}\tilde p_{\ell}^{j}=1$ for all $\ell\ge 0$.

Denote the probability density by $v_\ell^j$ and the initial ones by
\[
v_{0}^{j} ={1\over y^{j+1}-y^{j}}\int_{y^j}^{y^{j+1}} v_0(x)\,dx
=\begin{cases}
   \frac{1}{2\eps }\tilde p_0^j, & \mbox{if } j<0,\\
   \frac{1}{\eps}\,\tilde p_0^j, & \mbox{if } j\ge0.
 \end{cases}
\]
Since $v_{0}^{j}$ is not a scalar multiple of $\tilde p_{0}^{j}$, we cannot use the recursion relation \eqref{RW3} to construct $v_\ell^j$. Instead, we discretize the ratio $w(t,x)=\frac{v(t,x)}{\tau(x)}$. Denote $w_0:=\frac{v_0}{\tau}$ and discretize it in the interval $y^j<x<y^{j+1}$ by
\begin{equation*} \label{vol_aver+pmf}
w_{0}^{j} :={1\over y^{j+1}-y^{j}}\int_{y^j}^{y^{j+1}} \frac{v_0(x)}{\tau(x)}dx =\frac{1}{2\eps }\int_{y^j}^{y^{j+1}} v_0(x)dx=\frac{1}{2\eps }\tilde p_0^j,
\end{equation*}
where the heterogeneities in the mesh size $y^{j+1}-y^j$ and the sojourn time $\tau(x)$ work together to make $w_{0}^{j}$ a scalar multiple of $\tilde p_{0}^{j}$: $w_0^j=\frac{\tilde p_0^j }{2\eps }$ for all $j\in \Z$. Therefore, if we set
\[
w^{j}_\ell:={\tilde p_{\ell}^{j}\over2\eps }, \qquad \ell\in\Z_+,\ j\in\Z,
\]
then $w^{j}_\ell$ satisfy the recursion relation \eqref{RW3}, i.e.,
\begin{equation} \label{RW4}
\begin{aligned}
    &w^{j}_\ell = \left\{\begin{array}{ll}
        \frac{1}{4}w^{j-1} _{\ell-1} + \frac{1}{2}w ^{j} _{\ell-1} +
        \frac{1}{4} w^{j+1}_{\ell-1}, & \text{if $j<0$},\\
        \frac{1}{4}w^{-1} _{\ell-1} + \frac{1}{4}w^{0} _{\ell-1} +
        \frac{1}{2}w ^{1} _{\ell-1}, & \text{if $j=0$},\\
        \frac{1}{2}w^{j-1} _{\ell-1} + \frac{1}{2}w ^{j+1} _{\ell-1}, &
        \text{if $j>0$}.
        \end{array}\right.
\end{aligned}
\end{equation}

In the rest of this paper, we will show the existence of the diffusion limit $w(t,x)$ of the discrete model $w_\ell^j$ as $\eps\to0$. Then, the probability density distribution $v(t,x)$ of the limit $v_\ell^j$ is given by the relation,
$$
v(t,x)=\tau(x)w(t,x),
$$
where the initial value becomes $v(0,x)=v_0(x)$ as given in \eqref{initial}.

\section{Difference quotients}\label{sect.3}

Difference quotients are useful in finding the regularity of a weak solution and the differential equation satisfied by the limit of finite difference schemes. Let
\begin{equation*}\label{Ljl}
L^j_\ell := \frac{w_\ell^{j+1}-w_\ell^j}{y^{j+1}-y^j}
        =\left\{\begin{array}{ll}
        \frac{w^{j+1}_{\ell} - w^{j}_{\ell}}{2\eps } & \mbox{if } j<0,\\
        \frac{w^{j+1}_{\ell} - w^{j}_{\ell}}{\eps} & \mbox{if } j\ge0,
        \end{array}\right.
\end{equation*}
which is an approximation of the gradient $\partial_x w$. Let
\begin{equation}\label{Qjl}
Q^j_\ell := \frac{L^j_\ell-L^{j-1}_\ell}{2\eps }= \begin{cases}
                                                \frac{1}{\eps ^2}\big( \frac{1}{4} w^{j-1} _{\ell} -\frac{1}{2} w ^{j} _{\ell} +
        \frac{1}{4} w^{j+1} _{\ell}\big)& \mbox{if } j<0, \\
                                                \frac{1}{\eps ^2}\big( \frac{1}{4} w^{-1} _{\ell} - \frac{3}{4}w
        ^{0} _{\ell} +\frac{1}{2}w^{1} _{\ell}\big)& \mbox{if } j=0, \\
                                               \frac{1}{\eps ^2}\big(\frac{1}{2}w^{j-1} _{\ell} - w^j_\ell + \frac{1}{2}w ^{j+1}
        _{\ell}\big)& \mbox{if } j>0.
                                              \end{cases}
\end{equation}
Note that $y^{j+1}-y^{j}=2\eps $ for $j<0$ and $y^{j+1}-y^{j}=\eps$ for $j\ge0$. Therefore, $Q^j_\ell$ is an approximation of ${\partial^2_xw\over \tau}$, i.e., $Q^j_\ell$ approximates $\partial^2_xw$ for $j<0$ and  ${1\over2}\partial^2_xw$ for $j>0$. Note that it is not an approximation of $\partial_x({\partial_xw\over\tau})$. From \eqref{RW4}, we obtain
\begin{equation} \label{wjl}
    w^{j}_\ell-w^{j}_{\ell-1} = \begin{cases}
       \frac{1}{4}w^{j-1} _{\ell-1} - \frac{1}{2}w ^{j} _{\ell-1} +
        \frac{1}{4} w^{j+1}_{\ell-1} & \mbox{if } j<0, \\
        \frac{1}{4}w^{-1} _{\ell-1} - \frac{3}{4}w^{0} _{\ell-1} +
        \frac{1}{2}w ^{1} _{\ell-1} & \mbox{if } j=0, \\
        \frac{1}{2}w^{j-1} _{\ell-1}-w^{j}_{\ell-1} + \frac{1}{2}w ^{j+1}
        _{\ell-1}& \mbox{if } j>0.
                                              \end{cases}
\end{equation}
Therefore, by comparing \eqref{Qjl} and \eqref{wjl}, we obtain
\[
 \frac{w^j_\ell - w^j_{\ell-1}}{\eps^2} = Q^j_{\ell-1} = \frac{1}{2\eps }\Big( \Big(\frac{w^{j+1}_{\ell-1} -
 w^{j}_{\ell-1}}{y^{j+1}-y^j}\Big) -\Big(\frac{w^{j}_{\ell-1} -
 w^{j-1}_{\ell-1}}{y^{j}-y^{j-1}} \Big)\Big).
\]
In terms of $w_\ell^j$ only, we obtain
\begin{equation} \label{RW1w}
 \frac{w^j_\ell - w^j_{\ell-1}}{\eps^2} = \frac{1}{2\eps }\Big( \Big(\frac{w^{j+1}_{\ell-1} - w^{j}_{\ell-1}}{y^{j+1}-y^j}\Big) -\Big(\frac{w^{j}_{\ell-1} - w^{j-1}_{\ell-1}}{y^{j}-y^{j-1}} \Big)\Big).
\end{equation}
This is the main relation which will give the diffusion equation satisfied by the probability density function.

\subsection{Uniform estimates and continuous interpolation}
The relations in \eqref{RW4} can be used to find relations for $L^j_\ell$ and $Q^j_\ell$, which are
\begin{equation} \label{eq:L}
L^j_\ell = \begin{cases}
             \frac{1}{4}L_{\ell-1}^{j-1} + \frac{1}{2}L_{\ell-1}^j +
        \frac{1}{4}L_{\ell-1}^{j+1} & \mbox{if } j<0, \\
             \frac{1}{2}L_{\ell-1}^{j-1} + \frac{1}{2}L_{\ell-1}^{j+1} & \mbox{if }j\ge0,
           \end{cases}
\end{equation}
and
\begin{equation}\label{eq:Q}
Q^j_\ell =\begin{cases}
            \frac{1}{4}Q_{\ell-1}^{j-1} + \frac{1}{2}Q_{\ell-1}^{j} +
        \frac{1}{4}Q_{\ell-1}^{j+1} & \mbox{if } j<0, \\
            \frac{1}{4}Q_{\ell-1}^{-1} + \frac{1}{4}Q_{\ell-1}^{0} +
        \frac{1}{2}Q_{\ell-1}^{1} & \mbox{if } j=0, \\
            \frac{1}{2}Q_{\ell-1}^{j-1} + \frac{1}{2}Q_{\ell-1}^{j+1} & \mbox{if }j>0.
          \end{cases}
\end{equation}
Note that $L^j_\ell$ and $Q^j_\ell$ are weighted averages of the elements of the previous step. This observation allows for the necessary key estimates.

\begin{lemma}[Uniform estimates] \label{uniform_est}\
\begin{enumerate}\item The total sum of $w_\ell^j$ is preserved,
$$
\sum_{j} w^j_\ell = \sum_{j} w^j_0.
$$
\item Sequences $w_\ell^j$, $L_\ell^j$, and $Q_\ell^j$ are bounded by the
    initial maximums, i.e.,
  $$
  |w^j_\ell| \le \sup _{j} |w^j_0|,\quad  |L^j_\ell| \le \sup _{j}
  |L^j_0|,\quad |Q^j_\ell| \le \sup _{j} |Q^j_0|.
  $$
\end{enumerate}
\end{lemma}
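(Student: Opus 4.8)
The plan is to handle the two parts separately, exploiting in both the weighted-average structure of the recursions emphasized just above the statement.

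For the boundedness statement (2), I observe that in \eqref{RW4}, \eqref{eq:L}, and \eqref{eq:Q} the value at step $\ell$ is expressed as a linear combination of at most three values at step $\ell-1$ whose coefficients are nonnegative and sum to $1$ in every sub-case (indeed $\frac14+\frac12+\frac14=1$, $\frac14+\frac14+\frac12=1$, and $\frac12+\frac12=1$). Writing such a relation as $w_\ell^j=\sum_k a_{jk}\,w_{\ell-1}^k$ with $a_{jk}\ge0$ and $\sum_k a_{jk}=1$, the triangle inequality gives $|w_\ell^j|\le\sum_k a_{jk}\,|w_{\ell-1}^k|\le\sup_m|w_{\ell-1}^m|$. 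Taking the supremum over $j$ shows that $\sup_j|w_\ell^j|$ is nonincreasing in $\ell$, and a trivial induction yields $|w_\ell^j|\le\sup_j|w_0^j|$. The identical argument applied to \eqref{eq:L} and to \eqref{eq:Q} produces the bounds for $L_\ell^j$ and $Q_\ell^j$. I expect no difficulty here; this part is purely the convex-combination (maximum-principle) bound.

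For the conservation statement (1), I would use the divergence form \eqref{RW1w}, which I rewrite as $w_\ell^j-w_{\ell-1}^j=\frac{\eps}{2}\big(L_{\ell-1}^j-L_{\ell-1}^{j-1}\big)$, where $L_{\ell-1}^j=(w_{\ell-1}^{j+1}-w_{\ell-1}^j)/(y^{j+1}-y^j)$ plays the role of a discrete flux across the interface between $y^j$ and $y^{j+1}$. Summing over $j\in\Z$, the right-hand side telescopes; if the boundary fluxes vanish, i.e. $L_{\ell-1}^j\to0$ as $j\to\pm\infty$, the telescoped sum is $0$ and hence $\sum_j w_\ell^j=\sum_j w_{\ell-1}^j$, so that induction on $\ell$ gives the claim.

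The only point requiring care, and the step I regard as the genuine (if mild) obstacle, is the justification of this telescoping: the interchange of the infinite summation with the recursion and the vanishing of the terms at infinity. This is supplied by the absolute summability $\sum_j|w_\ell^j|<\infty$: since $w_\ell^j=\tilde p_\ell^j/(2\eps)\ge0$ and $\sum_j\tilde p_\ell^j=1$ for every $\ell$, each $(w_\ell^j)_j$ is nonnegative with finite total mass $1/(2\eps)$, which forces $L_{\ell-1}^j\to0$ and legitimizes the rearrangement. Equivalently, one may sum \eqref{RW4} directly and check that the coefficient of each $w_{\ell-1}^k$ collects to exactly $1$; the bookkeeping is immediate for $k\le-2$ and $k\ge2$, and the only asymmetric cases are the three interface indices $k\in\{-1,0,1\}$. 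In fact, since $\sum_j\tilde p_\ell^j=1$ for all $\ell$ is already available, statement (1) may even be read off directly as $\sum_j w_\ell^j=\frac{1}{2\eps}\sum_j\tilde p_\ell^j=\frac{1}{2\eps}=\sum_j w_0^j$; the flux computation above is the self-contained route that does not invoke that fact.
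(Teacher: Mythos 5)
Your proof is correct, and it is worth recording how it relates to the paper's. Part (2) is exactly the paper's argument: the paper observes in one sentence that \eqref{RW4}, \eqref{eq:L}, and \eqref{eq:Q} express each step as a weighted average of the previous one, and your triangle-inequality induction is simply that observation written out in full. For part (1), however, the paper does not telescope the flux form \eqref{RW1w}; it sums \eqref{RW4} separately over the three regions, computing $\sum_{j>0} w^j_\ell = \tfrac12 w^0_{\ell-1} - \tfrac12 w^1_{\ell-1} + \sum_{j>0} w^j_{\ell-1}$ and its analogue for $j<0$, and checks that the interface terms cancel against $w^0_\ell$ when the three pieces are added. Your route is the same computation in different clothing---the paper's regional boundary terms are precisely your discrete fluxes evaluated near $j=0$---but organized globally, which shifts the burden from interface bookkeeping to behavior at infinity: you need $L^j_{\ell-1}\to 0$ as $j\to\pm\infty$, which you correctly obtain from nonnegativity and finite total mass (for fixed $\eps$ the mesh widths are bounded below, so summability of $w^j_{\ell-1}$ forces $w^j_{\ell-1}\to0$ and hence $L^j_{\ell-1}\to0$). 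Note that the paper's shift-of-index rearrangements silently require the same absolute convergence, so your explicitness here is a small but genuine improvement. One refinement: you justify summability by quoting $\sum_j \tilde p^j_\ell = 1$, which is uncomfortably close to the statement being proved (part (1) is that identity in the $w$ variables); but only \emph{finiteness} of $\sum_j w^j_\ell$ is needed, and that propagates by induction directly from the convex-combination structure of \eqref{RW4}, so your flux argument is indeed self-contained once this is said. Your closing alternative---collecting the coefficient of each $w^k_{\ell-1}$ in $\sum_j w^j_\ell$ and checking it equals $1$, including at the three interface indices---is again the paper's computation reorganized column-wise rather than region-wise.
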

\begin{proof} By \eqref{RW4},
\begin{align*}
 \sum_{j>0} w^j_\ell &= \sum_{j>0} \left(\frac{1}{2} w^{j-1}_{\ell-1} + \frac{1}{2}w^{j+1}_{\ell-1} \right) = \frac{1}{2} w^0_{\ell-1} - \frac{1}{2}w^1_{\ell-1} + \sum_{j>0} w^{j}_{\ell-1},\\
 w^0_\ell & = \frac{1}{4}w^{-1}_{\ell-1} + \frac{1}{4}w^0_{\ell-1} + \frac{1}{2} w^1_{\ell-1},\\
 \sum_{j<0} w^j_\ell &=\sum_{j<0}  \left(\frac{1}{4}w^{j-1}_{\ell-1} + \frac{1}{2}w^j_{\ell-1}+ \frac{1}{4}w^{j+1}_{\ell-1}\right)= - \frac{1}{4}w^{-1}_{\ell-1} + \frac{1}{4}w^0_{\ell-1} + \sum_{j<0} w^j_{\ell-1}.
\end{align*}
Therefore, $\displaystyle \sum_{j}{w^j_\ell} = \sum_j w^j_{\ell-1}$ and the first assertion follows. The second assertion is based on the fact that the three relations, \eqref{RW4},\eqref{eq:L}, and \eqref{eq:Q}, are in a form of weighted averages of the previous step. Therefore, the maximum should be in the initial value and the uniform estimate should be given in terms of initial values.
\end{proof}

To take the diffusion limit of a discrete model, we need to project the discrete values $w_\ell^j$ to a fixed continuum time-space $\R^+\times\R$. Then, we show the convergence of the continuum projections as $\eps\to0$. We construct it using four levels of interpolations. For $(t,y)\in D_\ell^j:=[t_{2\ell}, t _{2\ell+2}) \times [y^j, y^{j+1})$, denote $\lambda_\ell(t):=\frac{t-t_{2\ell}}{t_{2\ell+2}-t_{2\ell}}$ and $\nu^j(y):=\frac{y-y^j}{y^{j+1}-y^j}$, which takes values between 0 and 1 in the domain $D_\ell^j$. The four interpolations are
\begin{equation} \label{linear0}
\begin{aligned}
 w_1^\eps(t,y)&:= w^j_\ell,\\
 w_2^\eps(t,y)&: = \lambda_\ell(t) w^j_{\ell+1} + (1-\lambda_\ell(t))w^j_\ell,\\
 w_3^\eps(t,y)&: = \nu^j(y) w^{j+1}_{\ell} + (1-\nu^j(y))w^j_\ell,\\
  w^\eps(t,y)&:= \lambda_\ell(t)\nu^j(y)
  w_{\ell+1}^{j+1} + \lambda_\ell(t)(1-\nu^j(y))
  w_{\ell+1}^{j}\\
 &\qquad + (1-\lambda_\ell(t))\nu^j(y)
 w_{\ell}^{j+1} + (1-\lambda_\ell(t))(1-\nu^j(y))
 w_{\ell}^{j}.
  \end{aligned}
\end{equation}
The four projections have increasing higher regularities. The first one $w_1^\eps$ is constant on $D_\ell^j$ and piecewise constant in the space. The second one $w_2^\eps$ is piecewise linear in $t$ variable, piecewise constant in $y$ variable, and Lipschitz continuous in $t$ variable. The third one $w_3^\eps$ is piecewise linear in $y$ variable, piecewise constant in $t$ variable, and Lipschitz continuous in $y$ variable. The fourth projection $w^\eps$ is piecewise second order and Lipschitz continuous on the whole space $\R^+\times\R$. All of them take the value $w^j_\ell$ when $t=t_{2\ell}$ and $y=y^j$.

\begin{lemma} \label{misc}
Let $w_0\in W^{2,\infty}(\R)$ and $w^\eps$ be defined by $\eqref{linear0}$. Then, for any $\eps>0$ small and any $t>0$,
$$\|\partial_y w^\eps(t,\cdot)\|_{L^\infty(\R^+\times \R)} + \|\partial_t w^\eps(t,\cdot)\|_{L^\infty(\R^+\times \R)} \le C,$$
where $C$ depends only on $\|w_0\|_{W^{2,\infty}(\R)}$.
\end{lemma}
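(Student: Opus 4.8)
The plan is to differentiate the piecewise‑bilinear interpolant $w^\eps$ cell by cell and to recognize its two partial derivatives as weighted averages of the discrete difference quotients $L^j_\ell$ and $Q^j_\ell$, whose uniform control is already supplied by Lemma~\ref{uniform_est}. Fix $(t,y)$ in the interior of a cell $D^j_\ell$ and abbreviate $\lambda=\lambda_\ell(t)$ and $\nu=\nu^j(y)$. Since $\partial_t\lambda=\frac{1}{t_{2\ell+2}-t_{2\ell}}=\frac{1}{2\eps^2}$ and $\partial_y\nu=\frac{1}{y^{j+1}-y^j}$, differentiating the last line of \eqref{linear0} and using the definition of $L^j_\ell$ gives
\[
\partial_y w^\eps=\lambda\,L^j_{\ell+1}+(1-\lambda)\,L^j_\ell .
\]
For the time derivative, the same differentiation produces the increments $w^{j}_{\ell+1}-w^{j}_\ell$, which by \eqref{RW1w} equal $\eps^2 Q^j_\ell$, so that
\[
\partial_t w^\eps=\tfrac12\bigl[\nu\,Q^{j+1}_\ell+(1-\nu)\,Q^j_\ell\bigr].
\]
Thus $\partial_y w^\eps$ is a convex combination of $L^j_{\ell+1}$ and $L^j_\ell$, while $\partial_t w^\eps$ is one half of a convex combination of $Q^{j+1}_\ell$ and $Q^j_\ell$.

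By Lemma~\ref{uniform_est}(2) these convex combinations are dominated by the initial maxima, so pointwise on every cell, and therefore on all of $\R^+\times\R$,
\[
\|\partial_y w^\eps\|_{L^\infty}\le \sup_j|L^j_0|,\qquad \|\partial_t w^\eps\|_{L^\infty}\le \tfrac12\sup_j|Q^j_0| .
\]
It then remains to bound the two initial quantities by $\|w_0\|_{W^{2,\infty}(\R)}$, after which the lemma follows by summing the two estimates.

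The gradient term is routine. Each $w^j_0$ is the mean of $w_0$ over the cell $[y^j,y^{j+1}]$, so $L^j_0$ is a difference quotient of consecutive cell means; since $w_0\in W^{1,\infty}(\R)$, a mean‑value estimate bounds every such quotient by a fixed multiple of $\|w_0'\|_{L^\infty}$, uniformly in $\eps$ and including the transition cell $j=-1$ where the adjacent meshes have unequal widths $2\eps$ and $\eps$. This already controls $\partial_y w^\eps$.

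The hard part is $\sup_j|Q^j_0|$. Away from the origin the stencil in \eqref{Qjl} is a symmetric second difference of cell means on a uniform grid, so a second‑order Taylor expansion bounds it by $\|w_0''\|_{L^\infty}$ (with the correct factor $\tfrac12$ for $j>0$). The obstacle is concentrated at the transition index $j=0$ (and its neighbour $j=-1$), where the mesh width jumps from $2\eps$ to $\eps$: straddling the jump, the asymmetric weights $(\tfrac14,-\tfrac34,\tfrac12)$ annihilate constants but no longer annihilate the affine part of $w_0$, and a direct expansion of $Q^0_0$ leaves a term of size $\|w_0'\|_{L^\infty}/\eps$. The leading singularity is, however, a dipole: the $O(1/\eps)$ excess in $Q^0_0$ is matched by an equal and opposite excess in $Q^{-1}_0$. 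The crux is therefore to combine this cancellation with the smoothing built into the averaging recursion \eqref{eq:Q}, which damps the transition spike over the subsequent even time steps and is precisely where the special structure of the twice‑as‑long sojourn time is used. Making this cancellation quantitative, so that the transition contribution to $\partial_t w^\eps$ stays $O(1)$ uniformly in $\eps$, is the step I expect to require the most care.
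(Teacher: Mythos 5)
Your reduction step coincides exactly with the paper's proof: the paper likewise bounds $|\partial_t w^\eps|$ by $\sup_j|Q^j_0|$ and $|\partial_y w^\eps|$ by $\sup_j|L^j_0|$, citing Lemma \ref{uniform_est}, \eqref{RW1w} and \eqref{linear0}, and your convex-combination formulas $\partial_y w^\eps=\lambda L^j_{\ell+1}+(1-\lambda)L^j_\ell$ and $\partial_t w^\eps=\frac12\big(\nu Q^{j+1}_\ell+(1-\nu)Q^j_\ell\big)$ are correct and merely make that citation explicit. The divergence is at the initial bound, which the paper dismisses in one line as ``obvious''. Your alarm there is quantitatively justified for the cell-mean initialization $w^j_0=\tilde p^j_0/(2\eps)$ of Section \ref{sect.2}: those values sample $w_0$ effectively at the cell midpoints $-\eps,\ \eps/2,\ 3\eps/2$, the first moment of the interface weights $(\frac14,-\frac34,\frac12)$ at these points is $\eps/8$, and hence $Q^0_0=\frac{w_0'(0)}{8\eps}+O(1)$ and $Q^{-1}_0=-\frac{w_0'(0)}{8\eps}+O(1)$ --- exactly the dipole you describe. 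So you have caught a real imprecision that the paper buries in ``it is obvious''.

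However, your proposed completion is a dead end for the statement as given, and this is the genuine gap: the lemma claims a bound in $L^\infty(\R^+\times\R)$, which includes the first time slab $t\in[0,2\eps^2)$, where the averaging recursion \eqref{eq:Q} has not yet acted; there $\partial_t w^\eps$ at $\nu=0$ equals $\frac12 Q^0_0\sim\frac{w_0'(0)}{16\eps}$ no matter what cancellation occurs at later steps. The dipole structure combined with the smoothing of \eqref{eq:Q} can yield at best time-weighted bounds of the form $|Q^j_\ell|\le C\big(1+\frac{1}{\eps\ell}\big)$, i.e.\ $C(1+\eps/t)$, never a constant uniform down to $t=0$, so no quantitative version of your plan proves the lemma. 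The workable repair goes through the initialization rather than the cancellation: take nodal values $w^j_0:=w_0(y^j)$ (an $O(\eps)$ modification of the cell means, which changes nothing in the limit). Then every stencil in \eqref{Qjl} annihilates affine functions --- at the interface the nodes are $(-2\eps,0,\eps)$ and $\frac14(-2\eps)+\frac12\,\eps=0$, and similarly at $j=-1$ --- so a second-order Taylor expansion gives $\sup_j|Q^j_0|\le C\|w_0''\|_{L^\infty}$ and $\sup_j|L^j_0|\le\|w_0'\|_{L^\infty}$ uniformly in $\eps$, which is evidently what the paper's ``obvious'' assertion tacitly presumes. With the literal cell-mean data the assertion, and hence the uniform bound on the first slab, fails exactly as you computed; the fix is to adjust the data at the interface cells, not to chase the dipole through the recursion.
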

\begin{proof}
It is obvious that
$$|L_0^j| + |Q_0^j| \le C\|w_0\|_{W^{2,\infty}(\R)}.$$
By Lemma \ref{uniform_est}, \eqref{RW1w} and \eqref{linear0},
 \begin{align*}
 |\partial_t w^\eps(t,y)| \le \sup_j |Q^j_{\ell}|\le \sup_j |Q^j_{0}|,\quad \quad
 |\partial_y w^\eps(t,y)| \le \sup_j |L_0^j|.
 \end{align*}
 The proof is complete.
\end{proof}

We denote the space of Lipschitz continuous functions on $\R^+\times \R$ by $C^{0,1}\big(\R^+\times \R\big)$ below.

\begin{proposition}\label{compactness} There exists a subsequence $\{w^{\eps_i}\}$ and a function $w\in C^{0,1}\big(\R^+\times \R\big)$ such that $w^{\eps_i}\rightarrow w$ uniformly on any compact set $K \subset \R^+\times \R$ and $\partial_y w^{\eps_i} \rightharpoonup \partial_y w$ weakly$^*$ in $L^\infty(\R^+\times \R)$.
\end{proposition}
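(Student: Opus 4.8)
The plan is to combine two classical compactness principles: the Arzel\`a--Ascoli theorem to produce the uniformly convergent subsequence together with a Lipschitz limit, and the Banach--Alaoglu theorem to produce the weak\(^*\) convergence of the spatial derivatives. The two inputs I need are already in hand. First, Lemma \ref{uniform_est} gives the uniform nodal bound \(|w_\ell^j|\le \sup_j|w_0^j|\le \|w_0\|_{L^\infty(\R)}\); since each \(w^\eps\) is, on every cell \(D_\ell^j\), a convex (bilinear) combination of the four corner values \(w_\ell^j, w_\ell^{j+1}, w_{\ell+1}^j, w_{\ell+1}^{j+1}\) (the four coefficients in \eqref{linear0} are nonnegative and sum to \(1\)), the same bound passes to \(w^\eps\), so \(\{w^\eps\}\) is uniformly bounded on \(\R^+\times\R\). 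Second, Lemma \ref{misc} bounds \(\|\partial_t w^\eps\|_{L^\infty}\) and \(\|\partial_y w^\eps\|_{L^\infty}\) by a constant \(C=C(\|w_0\|_{W^{2,\infty}(\R)})\) independent of \(\eps\); hence \(\{w^\eps\}\) is equi-Lipschitz, and in particular equicontinuous.

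For the uniform convergence I would fix an exhaustion of the domain by compact sets \(K_1\subset K_2\subset\cdots\), say \(K_m=[0,m]\times[-m,m]\) with \(\bigcup_m K_m=\R^+\times\R\). On each \(K_m\) the family is uniformly bounded and equicontinuous, so Arzel\`a--Ascoli yields a subsequence converging uniformly on \(K_m\). Extracting nested subsequences as \(m\) increases and passing to the diagonal produces a single subsequence \(\{w^{\eps_i}\}\) that converges uniformly on each \(K_m\), hence on every compact \(K\subset\R^+\times\R\), to a limit \(w\). Since a uniform limit of functions sharing a common Lipschitz constant \(C\) is itself \(C\)-Lipschitz, we get \(w\in C^{0,1}(\R^+\times\R)\).

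It remains to upgrade to weak\(^*\) convergence of the gradients. Each \(w^\eps\) is Lipschitz, so \(\partial_y w^\eps\) is well defined a.e.\ and lies in \(L^\infty(\R^+\times\R)\), and \(\{\partial_y w^{\eps_i}\}\) is bounded there. Using \(L^\infty(\R^+\times\R)=\big(L^1(\R^+\times\R)\big)^*\), Banach--Alaoglu lets me pass to a further (unrelabeled) subsequence with \(\partial_y w^{\eps_i}\rightharpoonup g\) weakly\(^*\) for some \(g\in L^\infty\). To identify \(g=\partial_y w\), I test against an arbitrary \(\varphi\in C_c^\infty(\R^+\times\R)\): integration by parts gives \(\int \partial_y w^{\eps_i}\,\varphi = -\int w^{\eps_i}\,\partial_y\varphi\), and the uniform convergence of \(w^{\eps_i}\) on the compact support of \(\varphi\) permits passing to the limit on the right, yielding \(\int g\,\varphi = -\int w\,\partial_y\varphi = \int \partial_y w\,\varphi\). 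As \(\varphi\) is arbitrary, \(g=\partial_y w\) as distributions and hence a.e., the weak derivative \(\partial_y w\) existing in \(L^\infty\) precisely because \(w\) is Lipschitz.

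The argument is essentially routine, and the delicate points are organizational rather than analytic. The first is the diagonalization that converts compact-set-by-compact-set convergence into convergence on all compact subsets of the noncompact domain \(\R^+\times\R\). The second, which I regard as the main point to get right, is ensuring that a single subsequence delivers both conclusions: this is achieved by first fixing the uniformly convergent subsequence and only then thinning it for the Banach--Alaoglu step, after which the integration-by-parts identification pins the weak\(^*\) limit to the distributional derivative of the already-determined limit \(w\).
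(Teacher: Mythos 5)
Your proof is correct and is essentially the paper's argument spelled out in full: the paper's proof is a one-line appeal to the uniform Lipschitz bounds of Lemma \ref{misc} and ``compactness in $C^{0,1}(\R^+\times\R)$,'' which is exactly your Arzel\`a--Ascoli/diagonal extraction plus the Banach--Alaoglu step with the integration-by-parts identification of the weak$^*$ limit as $\partial_y w$ (the extraction of a weak$^*$ convergent \emph{subsequence} tacitly uses separability of $L^1$, a routine point). No gaps; your elaboration matches the intended proof.
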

\begin{proof}
 This is due to the uniform bound in Lemma \ref{misc} and the compactness in $C^{0,1}(\R^+ \times \R)$.
\end{proof}

\begin{corollary} \label{cor_compactness}
 $w_1^{\eps_i},w_2^{\eps_i},w_3^{\eps_i}$ converges to the same limit $w$ uniformly on any compact set $K \subset \R^+\times \R$ and $\partial_y w_3^{\eps_i} \rightharpoonup \partial_y w$ weakly$^*$ in $L^\infty(\R^+\times \R)$.
\end{corollary}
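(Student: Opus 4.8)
The plan is to treat $w_1^\eps,w_2^\eps,w_3^\eps$ as small $L^\infty$-perturbations of the fully bilinear interpolant $w^\eps$, for which the conclusions are already supplied by Proposition \ref{compactness}. Since the four interpolants in \eqref{linear0} share the same nodal values on each cell $D_\ell^j$ and are convex combinations of the four corner values $w_\ell^j,w_\ell^{j+1},w_{\ell+1}^j,w_{\ell+1}^{j+1}$, their mutual discrepancies are governed by the nodal increments. The spatial increment obeys $|w_\ell^{j+1}-w_\ell^j|=|L_\ell^j|(y^{j+1}-y^j)\le 2\eps\sup_j|L_0^j|$ and the temporal increment obeys $|w_{\ell+1}^j-w_\ell^j|=\eps^2|Q_\ell^j|\le\eps^2\sup_j|Q_0^j|$ by \eqref{RW1w}, both suprema being finite by Lemma \ref{uniform_est} and Lemma \ref{misc}. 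First I would record the resulting cell-by-cell bound
$$
\|w^\eps-w_1^\eps\|_{L^\infty}+\|w^\eps-w_2^\eps\|_{L^\infty}+\|w^\eps-w_3^\eps\|_{L^\infty}\le C\eps,
$$
with $C$ depending only on $\|w_0\|_{W^{2,\infty}(\R)}$.

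Given this, the uniform convergence is immediate: on a compact set $K$ we have $w_k^{\eps_i}=w^{\eps_i}+O(\eps_i)$ for $k=1,2,3$, and $w^{\eps_i}\to w$ uniformly on $K$ by Proposition \ref{compactness}, so $w_1^{\eps_i},w_2^{\eps_i},w_3^{\eps_i}\to w$ uniformly on $K$ as well.

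For the weak$^*$ convergence of $\partial_y w_3^{\eps_i}$ I would again compare against $w^\eps$. Differentiating \eqref{linear0} on the interior of $D_\ell^j$ gives $\partial_y w_3^\eps=L_\ell^j$ and $\partial_y w^\eps=\lambda_\ell(t)L_{\ell+1}^j+(1-\lambda_\ell(t))L_\ell^j$, so
$$
\partial_y w^\eps-\partial_y w_3^\eps=\lambda_\ell(t)\,(L_{\ell+1}^j-L_\ell^j).
$$
The key point is that the time increment of $L$ is a second spatial difference of $L$: by \eqref{eq:L}, $L_{\ell+1}^j-L_\ell^j$ equals $\tfrac14$ or $\tfrac12$ times $L_\ell^{j-1}-2L_\ell^j+L_\ell^{j+1}$, and the latter equals $2\eps(Q_\ell^{j+1}-Q_\ell^j)$ by the definition \eqref{Qjl}; hence $|L_{\ell+1}^j-L_\ell^j|\le C\eps\sup_j|Q_0^j|$ uniformly in $j,\ell$. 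Consequently $\|\partial_y w^{\eps_i}-\partial_y w_3^{\eps_i}\|_{L^\infty}\le C\eps_i\to0$. Since $\partial_y w^{\eps_i}\rightharpoonup\partial_y w$ weakly$^*$ in $L^\infty$ by Proposition \ref{compactness} and the remainder tends to $0$ in $L^\infty$, hence weakly$^*$, I would conclude $\partial_y w_3^{\eps_i}\rightharpoonup\partial_y w$ weakly$^*$.

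The only step beyond bookkeeping is the uniform bound on $L_{\ell+1}^j-L_\ell^j$, and the mild care it requires is at the interface cells $j=0,1$, where the stencils in \eqref{eq:L} and \eqref{Qjl} are asymmetric; there I would check by hand that the same $O(\eps)$ estimate persists because all $L_\ell^j$ and $Q_\ell^j$ stay bounded by their initial suprema. As an alternative that sidesteps this computation, one may identify the weak$^*$ limit through distributions: the uniform-on-compacts convergence $w_3^{\eps_i}\to w$ forces $\partial_y w_3^{\eps_i}\to\partial_y w$ in $\mathcal D'(\R^+\times\R)$, and since $\{\partial_y w_3^{\eps_i}\}$ is bounded in $L^\infty$ (as $\partial_y w_3^\eps=L_\ell^j$ with $|L_\ell^j|\le\sup_j|L_0^j|$) and hence weak$^*$ precompact, every weak$^*$ limit point must coincide with the distributional limit $\partial_y w$, giving convergence of the full sequence.
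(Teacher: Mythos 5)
Your proof is correct, and while it starts from the same comparison as the paper --- on each cell $D_\ell^j$ one has $\partial_y w^\eps-\partial_y w_3^\eps=\lambda_\ell(t)\bigl(L_{\ell+1}^j-L_\ell^j\bigr)$, which is exactly the paper's time-shift identity $\partial_y w_3^{\eps_i}(t+2\eps_i^2,y)-\partial_y w_3^{\eps_i}(t,y)$ --- it handles the key step differently. The paper pairs this difference against an arbitrary $g\in L^1$, asserts that the integrand tends to $0$ pointwise, and invokes dominated convergence; it never quantifies why $L_{\ell+1}^j-L_\ell^j$ vanishes. You instead prove the uniform bound $|L_{\ell+1}^j-L_\ell^j|\le C\eps\sup_j|Q_0^j|$ directly from the recursion \eqref{eq:L} together with the defining relation $Q_\ell^j=(L_\ell^j-L_\ell^{j-1})/(2\eps)$ and Lemma \ref{uniform_est}, obtaining strong $L^\infty$ convergence $\|\partial_y w^{\eps_i}-\partial_y w_3^{\eps_i}\|_{L^\infty}\le C\eps_i$. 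This is strictly stronger than what the paper establishes, comes with a rate, and in fact supplies precisely the justification that the paper's ``converges to $0$ pointwise'' claim leaves implicit. One remark: your caution about the interface cells $j=0,1$ is unnecessary, since the identity $L_\ell^{j-1}-2L_\ell^j+L_\ell^{j+1}=2\eps\bigl(Q_\ell^{j+1}-Q_\ell^j\bigr)$ holds for \emph{every} $j$ (the case distinction in \eqref{Qjl} only concerns the expansion in terms of $w$, not the defining difference quotient), and only the harmless averaging weight $\tfrac14$ versus $\tfrac12$ changes across the interface; Lemma \ref{uniform_est} bounds all $Q_\ell^j$ uniformly regardless. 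Your $O(\eps)$ nodal-increment bound also makes the uniform convergence of $w_1^{\eps_i},w_2^{\eps_i},w_3^{\eps_i}$ explicit, which the paper dismisses with ``proofs of the three cases are similar,'' and your alternative identification of the weak$^*$ limit through distributional convergence plus $L^\infty$ boundedness and uniqueness of subsequential limits is likewise a valid, test-function-free shortcut.
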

\begin{proof} Proofs of the three cases are similar. We prove the assertion that $\partial_y w_3^{\eps_i}$ weakly$^*$ converges to the same limit $\partial_y w^{\eps_i}$ converges only. For a test function $g\in L^1(\R^+\times \R)$,
 \begin{align*}
  &\int_0^\infty\int_\R \Big|\partial_y {w}^{\eps_i} - \partial_y w_3^{\eps_i}\Big| \Big|g(t,y)\Big| \, dydt  \\
  &= \sum_{\ell\ge0}\int_{t_{2\ell}}^{t_{2(\ell+1)}}\int_\R \Big|\lambda_\ell(t)\Big| \: \Big|\partial_y w_3^{\eps_i}(t+2\eps_i^2,y)-\partial_y w_3^{\eps_i}(t,y)\Big| ~\Big|g(t,y)\Big| \,dydt\\
  &\le \int_0^\infty \int_\R \Big|\partial_y w_3^{\eps_i}(t+2\eps_i^2,y)-\partial_y w_3^{\eps_i}(t,y) \Big| \Big|g(t,y)\Big| \, dydt.
 \end{align*}
Since the integrand converges to $0$ pointwise, the difference converges to $0$ by the Lebesgue dominated convergence theorem.
\end{proof}

\section{Convergence to weak solution}\label{sect.4}

Now we show that the subsequential limit is a weak solution to a Cauchy problem of a non-autonomous parabolic equation,
\begin{equation}\label{eqnw}
   \tau(y) w_t = \frac{1}{2} w_{yy},\qquad w(0,y)=w_0(y).
\end{equation}

We define a weak solution of the limiting equation \eqref{eqnw} and show that the subsequential limit is a weak solution.

\begin{definition}[Weak solution] We call $w$ a \emph{weak solution} of
\eqref{eqnw} if $w, w_y \in L^1_{loc}(\R^+\times \R)$ and, for any test function $\phi \in C_c^1([0,\infty)\times \R)$,
 \begin{equation}\label{DefWeak}
  \int_0^\infty\int_\R  \tau w\phi_t\,dydt
  +\int_\R \tau w_0\phi(0,y)\,dy = \frac{1}{2}\int_0^\infty\int_\R w_y\phi_y\, dydt.
 \end{equation}
\end{definition}
\begin{theorem}\label{thm2.1}
Let $w_0\in C^{\infty}_c(\R)$ and $w^\eps$ be defined by $\eqref{linear0}$. Then, there exists a subsequence $\{w^{\eps_i}\}$ and a function $w\in C^{0,1}\big(\R^+\times \R\big)$ such that $w^{\eps_i}\rightarrow w$ uniformly on any compact set $K \subset \R^+\times \R$ and $\partial_y w^{\eps_i} \rightharpoonup \partial_y w$ weakly$^*$ in $L^\infty(\R^+\times \R)$. The subsequential limit is a weak solution of \eqref{eqnw}.
\end{theorem}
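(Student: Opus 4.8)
The compactness conclusion restated in the theorem is exactly Proposition~\ref{compactness}; so the plan is to take the subsequence $\{w^{\eps_i}\}$ and limit $w$ it provides, together with Corollary~\ref{cor_compactness}, and verify that $w$ satisfies the weak identity~\eqref{DefWeak}. The driver is the exact discrete relation~\eqref{RW1w}, which holds for every $j\in\Z$ (a direct check confirms it at the interface index $j=0$ as well). The structural fact I would lean on throughout is that the heterogeneous mesh and the heterogeneous sojourn time are matched so that $\tau$ is constant on each cell $[y^j,y^{j+1})$ and $\tau\,\Delta y^j=2\eps$ for every $j$; this single identity is what lets one weight reproduce the coefficient $\tau$ appearing in~\eqref{DefWeak}.

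Fix $\phi\in C_c^1([0,\infty)\times\R)$ and write $\phi_\ell^j:=\phi(t_{2\ell},y^j)$. I would multiply~\eqref{RW1w} by $2\eps\,\eps^2\,\phi_{\ell-1}^j$ and sum over $\ell\ge1$ and $j\in\Z$; the sum is finite by the compact support of $\phi$. This produces the discrete identity
\begin{equation*}
\sum_{\ell\ge1}\sum_j 2\eps\,(w_\ell^j-w_{\ell-1}^j)\,\phi_{\ell-1}^j
=\sum_{\ell\ge1}\sum_j \eps^2\,(L_{\ell-1}^j-L_{\ell-1}^{j-1})\,\phi_{\ell-1}^j .
\end{equation*}
Summing the left side by parts in the time index $\ell$ (the boundary term at $\ell=\infty$ vanishing by compact support) turns it into
\begin{equation*}
-\sum_j 2\eps\,w_0^j\,\phi_0^j\;-\;\sum_{\ell\ge1}\sum_j 2\eps\,w_\ell^j\,(\phi_\ell^j-\phi_{\ell-1}^j),
\end{equation*}
while summing the right side by parts in the space index $j$ (a purely algebraic step, insensitive to the non-uniform spacing) gives $-\sum_{\ell\ge1}\sum_j \eps^2\,L_{\ell-1}^j\,(\phi_{\ell-1}^{j+1}-\phi_{\ell-1}^j)$.

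It then remains to recognize each sum as a Riemann sum and pass to the limit along $\eps_i$. Using $2\eps=\tau\,\Delta y^j$ and $\phi_\ell^j-\phi_{\ell-1}^j=\partial_t\phi\,(2\eps^2)+O(\eps^4)$, the first two sums are the Riemann sums of $-\int_\R\tau w_0\,\phi(0,y)\,dy$ and $-\int_0^\infty\!\int_\R \tau w\,\phi_t\,dy\,dt$; here I would invoke the uniform convergence $w_1^{\eps_i}\to w$ on the compact support of $\phi$ from Corollary~\ref{cor_compactness}. For the last sum, $\partial_y w_3^{\eps}$ equals $L_\ell^j$ on the cell $D_\ell^j$ and $\eps^2=\tfrac12(2\eps^2)$, so after replacing $\phi_{\ell-1}^{j+1}-\phi_{\ell-1}^j$ by $\int_{y^j}^{y^{j+1}}\phi_y$ it becomes $-\tfrac12\int_0^\infty\!\int_\R \partial_y w_3^{\eps}\,\phi_y\,dy\,dt+o(1)$, and the weak-$*$ convergence $\partial_y w_3^{\eps_i}\rightharpoonup w_y$ sends it to $-\tfrac12\int_0^\infty\!\int_\R w_y\,\phi_y\,dy\,dt$. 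Equating the two limits and clearing the overall sign yields precisely~\eqref{DefWeak}.

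The step I expect to be most delicate is the right-hand limit, since it is the only place where uniform convergence is unavailable: the discrete gradients $L_{\ell-1}^j$ converge to $w_y$ only weakly-$*$, so one must genuinely pair them against the smooth factor $\phi_y$ and appeal to Corollary~\ref{cor_compactness} rather than pass to the limit term by term. Making the replacement of the difference quotient $(\phi_{\ell-1}^{j+1}-\phi_{\ell-1}^j)/\Delta y^j$ by $\phi_y$ uniform within that weak-$*$ pairing, and controlling it uniformly across the interface $j=0$ where $\Delta y^j$ jumps, is the technical core. The remaining discrepancies are routine bookkeeping: $w_\ell^j$ lives at time $t_{2\ell}$ but is paired with the slab $[t_{2(\ell-1)},t_{2\ell})$, and the $\eps^2$ in~\eqref{RW1w} is half the physical time step $2\eps^2$; both contribute only errors that are $O(\eps)$ against the uniform bounds of Lemma~\ref{misc} and disappear as $\eps_i\to0$.
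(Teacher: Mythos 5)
Your proposal is correct, and it proves the same weak identity \eqref{DefWeak} from the same ingredients the paper uses --- the exact relation \eqref{RW1w} (valid at the interface index $j=0$, as you checked), the matched-heterogeneity identity $\tau\,\Delta y^j = 2\eps$, and the convergences of Proposition \ref{compactness} and Corollary \ref{cor_compactness} --- but it arranges the duality differently. The paper works top-down: it starts from the left side of \eqref{DefWeak} evaluated at $w_2^{\eps_i}$, integrates by parts in $t$ at the continuum level (using that $w_2^{\eps_i}$ is Lipschitz in $t$), inserts \eqref{RW1w}, and then sums by parts in space against \emph{cell averages} of $\tau\phi$; this produces the error terms $\mathcal{E}^j_\ell$, which straddle adjacent cells of different widths and therefore need a separate estimate at the interface ($o(1)$ at $j=0$ versus $o(\eps)$ elsewhere). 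You work bottom-up: multiply \eqref{RW1w} by $2\eps\,\eps^2\,\phi_{\ell-1}^j$ at \emph{point values} of the test function and perform Abel summation in both indices. One payoff you did not claim for yourself: because your spatial pairing is $L^j_{\ell-1}\,(\phi^{j+1}_{\ell-1}-\phi^{j}_{\ell-1})$ over single cells, the replacement by $\int_{y^j}^{y^{j+1}}\partial_y\phi\,dy$ is \emph{exact} by the fundamental theorem of calculus and never crosses the interface, so the step you flagged as the technical core --- uniform control across $j=0$ where $\Delta y^j$ jumps --- largely dissolves in your arrangement; the only interface input is $\tau\,\Delta y^j=2\eps$, and the remaining discrepancies (the $2\eps^2$ time shift of $w_1^{\eps}$, replacing $\partial_t\phi(\cdot,y^j)$ and $\partial_y\phi(t_{2(\ell-1)},\cdot)$ by their values on the cell) are $o(1)$ by uniform continuity of $\partial_t\phi,\partial_y\phi$ on the compact support together with the bounds of Lemma \ref{misc}. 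In exchange, the paper's cell-average formulation ties the computation directly to the mass and probability bookkeeping reused in Theorem \ref{thmL1L2}; beyond this, the two routes are the same argument, and your sign and scaling bookkeeping ($\eps^2=\tfrac12\cdot2\eps^2$, $w_\ell^j$ paired with the slab $[t_{2(\ell-1)},t_{2\ell})$) is right.
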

\begin{proof}
It remains to show that the subsequential limit $w^{\eps_i}\to w$ is a weak solution of \eqref{eqnw}. 
Consider the two terms on the left-hand side in \eqref{DefWeak}.
Since $w^{\eps_i}, w_1^{\eps_i}, w_2^{\eps_i} \rightarrow w$ uniformly on each compact set and $w_1^{\eps_i}(0,\cdot)\rightarrow w_0(\cdot)$ in $L^1$, the left-hand side converges as
\begin{align*}
&\int_0^\infty\int_\R w(t,y)  \tau(y)\partial_t\phi(t,y)\, dy
dt+\int_\R w_0(y) \tau(y)\phi(t,y) \, dy\\
=\:& \lim_{\eps_i\to0}\int_0^\infty\int_\R w^{\eps_i}(t,y)  \tau(y)\partial_t\phi(t,y)\, dy dt
 +\int_\R w^{\eps_i}(0,y) \tau(y)\phi(t,y) \, dy.
\intertext{In particular, this coincides with}
\quad &\lim_{\eps_i\to0}\int_0^\infty\int_\R w_2^{\eps_i}(t,y)  \tau(y)\partial_t\phi(t,y)\, dy dt
 +\int_\R w_2^{\eps_i}(0,y) \tau(y)\phi(t,y) \, dy.
\end{align*}
Denote a characteristic function by $\chi_j:=\chi_{[y^j,y^{j+1})}$. Then,
\begin{align*}
&\int_0^\infty\int_\R w_2^{\eps_i}(t,y)  \tau(y)\partial_t\phi(t,y)\, dy dt
 +\int_\R w_2^{\eps_i}(0,y) \tau(y)\phi(t,y) \, dy\\
 &\quad=-\sum_{\ell}\int_{t_{2\ell}}^{t_{2\ell+2}}\int_\R \partial_t
 w_2^{\eps_i}(t,y)  \tau(y)\phi(t,y)\, dy dt \\
 &\quad=-\sum_{\ell}\sum_{j}\int_{t_{2\ell}}^{t_{2\ell+2}}\int_\R
 \frac{w^j_{\ell+1}-w^j_{\ell}}{2\eps_i^2}  \tau\phi \chi_j(y)\, dy dt\\
 &\quad=-\sum_{\ell}\sum_{j}\frac{1}{2}\int_{t_{2\ell}}^{t_{2\ell+2}}\int_\R \frac{1}{2\eps_i } \Big( \Big(\frac{w^{j+1}_{\ell} -
 w^{j}_{\ell}}{y^{j+1}-y^j}\Big) -\Big(\frac{w^{j}_{\ell} -
 w^{j-1}_{\ell}}{y^{j}-y^{j-1}} \Big)\Big)  \tau\phi \chi_j(y)\, dy
 dt\\
 &\quad=-\sum_{\ell}\sum_{j}\frac{1}{2}\int_{t_{2\ell}}^{t_{2\ell+2}}\int_\R
 \frac{1}{2\eps_i } \Big(\frac{w^{j}_{\ell} -
 w^{j-1}_{\ell}}{y^{j-1}-y^j}\Big)
 \big(\chi_{j-1}(y)-\chi_{j}(y)\big) \tau\phi \, dy dt\\
 &\quad=-\sum_{\ell}\sum_{j}\frac{1}{2}\int_{t_{2\ell}}^{t_{2\ell+2}}B_\ell^j\,dt,
\end{align*}
where the third equality is from \eqref{RW1w} and the last term $B^j_\ell$ denotes
\[
B_\ell^j:=\int_\R \frac{1}{2\eps_i } \Big(\frac{w^{j}_{\ell} -
w^{j-1}_{\ell}}{y^{j-1}-y^j}\Big)
\big(\chi_{j-1}(y)-\chi_{j}(y)\big) \tau\phi \, dy.
\]
Then,
\begin{align*}
 B_\ell^j &= \Big(\frac{w^{j}_{\ell} - w^{j-1}_{\ell}}{y^{j}-y^{j-1}}\Big)
        \left\{\frac{1}{2\eps_i }\int_{y^{j-1}}^{y^{j}}  \tau\phi  \, dy -
        \frac{1}{2\eps_i }\int_{y^{j}}^{y^{j+1}}  \tau\phi  \, dy\right\}\\
      &=\Big(\frac{w^{j}_{\ell} - w^{j-1}_{\ell}}{y^{j}-y^{j-1}}\Big)
        \bigg( \frac{1}{y^j-y^{j-1}}\int_{y^{j-1}}^{y^{j}} \phi  \, dy-
        \frac{1}{y^{j+1}-y^{j}}\int_{y^{j}}^{y^{j+1}} \phi  \, dy
        \bigg)\\
      &= -\int_{y^{j-1}}^{y^{j}}\Big(\frac{w^{j}_{\ell} -
 w^{j-1}_{\ell}}{y^{j}-y^{j-1}}\Big)\partial_y\phi \,dy + \mathcal{E}^j_\ell,
\end{align*}
where
\[
\mathcal{E}^{j}_\ell = \Big(\frac{w^{j}_{\ell} - w^{j-1}_{\ell}}{y^{j}-y^{j-1}}\Big) \Big(\phi(y^j)-\phi(y^{j-1}) + \frac{1}{y^j-y^{j-1}}\int_{y^{j-1}}^{y^{j}} \phi  \, dy-
        \frac{1}{y^{j+1}-y^{j}}\int_{y^{j}}^{y^{j+1}} \phi  \, dy
        \Big).
\]
By writing
\begin{align*}
 \frac{\int_{y^{j}}^{y^{j+1}} \phi  \; dy}{y^{j+1}-y^{j}} - \phi(y^{j}) &=\big(y^{j+1}-y^j\big) \int_0^1\int_0^\lambda \partial_y\phi \big(sy^{j+1} + (1-s)y^j\big) \,ds d\lambda\\
 \frac{\int_{y^{j-1}}^{y^{j}} \phi  \; dy}{y^{j}-y^{j-1}} - \phi(y^{j-1}) &= \big(y^{j}-y^{j-1}\big) \int_0^1\int_0^\lambda  \partial_y\phi \big(sy^{j} + (1-s)y^{j-1}\big)\,ds d\lambda,
\end{align*}
we observe $|\mathcal{E}^j_\ell| = o(\eps)$ if $j\ne0$ and $o(1)$ if $j=0$ as $\epsilon \rightarrow 0$. Since $\phi$ is compactly supported, $\sum_j |\mathcal{E}^j_\ell| = o(1)$ as $\epsilon \rightarrow 0$ and $\sum_\ell\sum_j |\mathcal{E}^j_\ell| = o(1)$ as $\epsilon \rightarrow 0$.
Therefore, using Corollary \ref{cor_compactness} we obtain 
\[
-\sum_{\ell}\sum_{j}\frac{1}{2}\int_{t_{2\ell}}^{t_{2\ell+2}}B_\ell^j\,dt = \frac{1}{2}\int_{0}^{\infty}\int_\R \partial_y w_3^{\eps_i}(t,y) \partial_y \phi(t,y) \,
dy + o(1)
\]
which converges to
\[
\frac{1}{2}\int_0^\infty\int_\R \partial_y w(t,y) \partial_y \phi(t,y) \, dy.
\]
Therefore, $w$ is a weak solution.
\end{proof}

We now show that there is an energy function such that the energy level of a weak solution decreases as $t\to\infty$. Let $w$ be a smooth solution of \eqref{eqnw} and define an energy function by
\[
e(t;w):={1\over 2}\int  \tau(y) w^2(t,y)\,dy.
\]
If $w(t,\cdot)$ vanishes at infinity for all $t > 0$, the energy function decreases in time:
\[
\frac{d}{dt}e(t;w)=\int  \tau ww_t\,dy=\frac{1}{2}\int ww_{yy}\,dy=-\frac{1}{2}\int w_y^2\,dy\le 0.
\]
We show that the energy of a weak solution also decreases in time if it is a subsequential limit found in Theorem \ref{thm2.1}.

\begin{theorem} \label{thmL1L2}
Let $w$ be a weak solution of \eqref{eqnw} and $w^{\eps_i}$ is the subsequence in Theorem \ref{thm2.1} that converges to $w$. Then,
\begin{enumerate}
 \item For all $t\in[0,\infty)$, $\int_\R \tau(y) w(t,y)\,dy = 1$.
 \item For all $t>0$, $e(t;w)\le e(0;w)$.
\end{enumerate}
\end{theorem}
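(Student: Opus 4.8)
The plan is to transfer both statements from exact discrete identities satisfied by $w_\ell^j$ to the limit $w$ using the local uniform convergence $w_1^{\eps_i}\to w$ supplied by Corollary \ref{cor_compactness}. The piecewise constant interpolation $w_1^{\eps}$ is the convenient vehicle, because $\tau$ is (a.e.) constant on every cell $[y^j,y^{j+1})$ with value $\tau(y^j)$ and $\tau(y^j)\,\Delta y^j\equiv2\eps$, so that on each time slab $t\in[t_{2\ell},t_{2\ell+2})$ one has the exact identities
\[
\int_\R \tau(y)\,w_1^\eps(t,y)\,dy=\sum_j \tau(y^j)\,w_\ell^j\,\Delta y^j=\sum_j \tilde p_\ell^j=1,
\qquad
\int_\R \tau(y)\,\big(w_1^\eps(t,y)\big)^2\,dy=2\eps\sum_j (w_\ell^j)^2=:E_\ell,
\]
the first being the conservation of probability recorded after \eqref{RW3} (equivalently Lemma \ref{uniform_est}(1)).

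For the mass identity the upper bound is immediate: since $w_1^{\eps_i}\ge0$, local uniform convergence gives $\int_{-R}^R\tau w=\lim_i\int_{-R}^R\tau w_1^{\eps_i}\le1$ for every $R$, and $R\to\infty$ yields $\int_\R\tau w\le1$. The lower bound is the crux and amounts to ruling out loss of mass at infinity, i.e.\ a tightness estimate uniform in $\eps$. For this I would prove a uniform second-moment bound for $M_\ell:=\sum_j(y^j)^2\tilde p_\ell^j$. Expanding $M_\ell$ through the forward evolution \eqref{RW4}, the first-order terms cancel in each bulk region (left and right), the boundary sites $j\in\{-1,0,1\}$ contribute only $O(\eps^2)$ because there $y^j=O(\eps)$, and one gets $M_\ell\le M_{\ell-1}+C\eps^2$. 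Iterating over the $\ell\sim t/(2\eps^2)$ steps needed to reach macroscopic time $t$ gives $M_\ell\le M_0+Ct$ with $M_0\to\int y^2\tau w_0\,dy<\infty$ (finite since $w_0\in C_c^\infty$), hence a bound $M_\ell\le M_T$ uniform in $\eps$ on $[0,T]$. Chebyshev then gives $\sum_{|y^j|>R}\tilde p_\ell^j\le M_T/R^2$, so $\int_{-R}^R\tau w_1^{\eps}(t,\cdot)\,dy\ge1-M_T/R^2-o(1)$; passing to the limit along $\eps_i$ on the compact $[-R,R]$ and then letting $R\to\infty$ yields $\int_\R\tau w\ge1$, which together with the upper bound proves (1). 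I expect this uniform second-moment/tightness estimate to be the main obstacle, since it is the only place where the unbounded tails must be controlled and where the non-uniform grid and the boundary site have to be handled by hand.

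For the energy inequality I would first show that the discrete energy $E_\ell$ is non-increasing. Writing the transition \eqref{RW4} as $w_\ell=Pw_{\ell-1}$, the matrix $P$ has nonnegative entries and both its row sums and its column sums equal $1$: the row sums are the weighted-average property already noted before Lemma \ref{uniform_est}, and the column sums equal $1$ by the direct check for $k<-1$, for $k\in\{-1,0,1\}$, and for $k>1$. Thus $P$ is doubly stochastic, and Jensen's inequality gives $\sum_j(w_\ell^j)^2=\sum_j\big(\sum_k P_{jk}w_{\ell-1}^k\big)^2\le\sum_j\sum_k P_{jk}(w_{\ell-1}^k)^2=\sum_k(w_{\ell-1}^k)^2$, so $E_\ell\le E_{\ell-1}\le\cdots\le E_0$. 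I then pass to the limit by lower semicontinuity: for any compact $K$, local uniform convergence gives $\int_K\tau w^2=\lim_i\int_K\tau(w_1^{\eps_i})^2\le\limsup_i E_\ell^{\eps_i}\le\limsup_i E_0^{\eps_i}=2e(0;w)$, where $E_0^{\eps_i}\to\int\tau w_0^2=2e(0;w)$ because $w_0$ is smooth with compact support. Taking the supremum over $K$ yields $\int_\R\tau w^2\le2e(0;w)$, that is, $e(t;w)\le e(0;w)$. In contrast to (1), this direction requires only lower semicontinuity, so no control of the tails is needed here.
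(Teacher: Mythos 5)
Your proof is correct, and at the decisive step it takes a genuinely different — and in fact more complete — route than the paper's. The discrete backbone is the same in both: the exact identity $\int_\R\tau\,w_1^{\eps}(t,\cdot)\,dy=\sum_j\tilde p_\ell^j=1$ for part (1), and for part (2) the energy decay via convexity of $p\mapsto p^2$ applied to the recursion (your doubly-stochastic formulation of the transition matrix is precisely what legitimizes the paper's one-line summation of the Jensen inequalities, since summing over $j$ needs the column sums to equal $1$, a fact the paper leaves implicit). The difference lies in the passage to the limit: the paper simply asserts that $w_1^{\eps_i}(t,\cdot)\to w(t,\cdot)$ in $L^1(\R)$ and deduces both conclusions from that, but this assertion is never established — Proposition \ref{compactness} and Corollary \ref{cor_compactness} provide only uniform convergence on compact sets, and for $t>0$ the discrete solutions have supports of width of order $t/\eps_i$, so no uniform compact support is available and $L^1(\R)$ convergence genuinely requires tightness. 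You correctly identified this as the crux: locally uniform convergence plus nonnegativity gives only $\int_\R\tau w\le1$, and your uniform second-moment bound supplies the missing lower bound. Your estimate does check out: computing the column increments of $\sum_j (y^j)^2 P_{jk}$, bulk sites with $k\le-2$ gain $2\eps^2$ per unit mass, sites with $k\ge2$ gain $\eps^2$, and the interface sites $k=-1,0,1$ gain $2\eps^2$, $\tfrac32\eps^2$, $\eps^2$ respectively, so $M_\ell\le M_{\ell-1}+2\eps^2$ and hence $M_\ell\le M_0+t$ uniformly in $\eps$ after $\ell\sim t/(2\eps^2)$ steps, with Chebyshev closing the argument. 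Your observation that part (2) needs only lower semicontinuity on compact sets (no tightness at all) is likewise accurate and cleaner than the paper's appeal to $L^1$ convergence with uniform bounds. In short: same discrete identities and convexity mechanism, but where the paper's limit passage rests on an unproved $L^1$ convergence claim, your tightness argument furnishes the justification — your version buys rigor at the cost of an explicit moment computation, while the paper's buys brevity at the cost of a genuine gap.
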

\begin{proof}
For $t\in [t_{2\ell}, t_{2\ell+2})$, we have
\begin{align*}
\int_\R  \tau(y)w_1^{\eps_i}(t,y) \, dy &= \sum_{j<0}w^j_\ell2{\eps_i}  + \sum_{j\ge0}2w^j_\ell \eps_i\\
&=\sum_{j<0}p^{2j}_{2\ell}+ \sum_{j\ge0}p^j_{2\ell}=
\sum_{j<0}p^{2j}_{0}+ \sum_{j\ge0}p^j_{0}  = 1.
\end{align*}
Therefore, since $w_1^{\eps_i}(t,\cdot) \rightarrow w(t,\cdot)$ in $L^1(\R)$,
$\int_\R \tau(y) w(t,y) \, dy = 1$.

Next consider the energy of the approximation $w^{\eps_i}_1$,
\begin{align*}
e(t_{2\ell};w^{\eps_i}_1)&=\int_\R  \tau(y) (w^{\eps_i}_1(t_{2\ell},y))^2 \,dy\\
&=\sum_{j<0}(w_\ell^{j})^2(2{\eps_i} ) +\sum_{j\ge0}2(w_\ell^{j})^2{\eps_i}  =\sum_{j<0}(p_{2\ell}^{2j})^2 +\sum_{j\ge0}(p_{2\ell}^{j})^2 .
\end{align*}
Since $f(p)=p^2$ is a convex function, \eqref{RW2} gives that
$$
\begin{aligned}
    &(p^{j}_{2\ell})^2 \le \left\{\begin{array}{ll}
        \frac{1}{4}(p^{j-2} _{2(\ell-1)})^2 + \frac{1}{2}(p^{j}_{2(\ell-1)})^2 +\frac{1}{4} (p^{j+2}_{2(\ell-1)})^2, & \text{if $j<0$ is even},\\
        \frac{1}{4}(p^{-2} _{2(\ell-1)})^2 + \frac{1}{4}(p^{0} _{2(\ell-1)})^2 +\frac{1}{2}(p ^{1} _{2(\ell-1)})^2, & \text{if $j=0$},\\
        \frac{1}{2}(p^{j-1} _{2(\ell-1)})^2 + \frac{1}{2}(p ^{j+1} _{2(\ell-1)})^2,
        & \text{if } j>0.
        \end{array}\right.
\end{aligned}
$$
Therefore, $e(t_{2\ell};w^{\eps_i}_1)\le e(0;w^{\eps_i}_1)$ for all $\ell>0$. Since $w_1^{\eps_i}(t,\cdot) \rightarrow w(t,\cdot)$ in $L^1(\R)$ and $|w_1|,|w|$ are uniformly bounded, by taking limit $\eps_i \rightarrow 0$ we have $e(t;w)\le e(0;w)$ for all $t>0$.
\end{proof}

The probability density function is $v(t,x)= \tau(x)\, w(t,x)$. Then, 
\[
v_t(t,x)=\tau(x)w_t(t,x)=\frac{1}{2}w_{xx}(t,x)=\frac{1}{2}\Big(\frac{v(t,x)}{\tau(x)}\Big)_{xx}
\]
and hence $v(t,x)$ satisfies
\begin{equation}\label{eqnv}
v_t=\frac{1}{2}\Big({v\over \tau(x)}\Big)_{xx},\qquad
v(0,x)={ \tau(x)w_0(x)}=v_0(x),
\end{equation}
where the initial value $v_0(x)$ is the original initial value given in \eqref{initial}. Let $v$ be defined by $\tau w$ with $w$ obtained in Theorem \ref{thm2.1} and define an energy function by
\[
e(t;v)={1\over 2}\int  \frac{1}{\tau(x)} v^2(t,x)\,dx.
\]
The energy function decreases in time by Theorem \ref{thmL1L2}.

\section{Green's function} \label{sect.5}

In this section we find the Green's function $G=G(t,x;a)$ of the diffusion equation, which is an explicit solution of
\begin{equation}\label{GV}
\begin{cases}
v_t=(\frac{1}{\tau}v)_{xx},\\
v(0,x)=\delta(x-a),
\end{cases}
\end{equation}
where $a\in\R$ is a given number and $\delta(x)$ is the Dirac delta distribution. Note that we have omitted $\frac{1}{2}$ in the equation \eqref{eqnw} for convenience; this can be done by doubling the scale of the time variable. Denote
\begin{equation}\label{mu}
\gamma(y):=\frac{1}{\tau(y)}=\begin{cases}
1&\mbox{ if } y<0,\\
\frac{1}{2}&\mbox{ if } y>0.
\end{cases}
\end{equation}
In the following, we will look for the Green's function such that $\gamma(\cdot)G(t,\cdot;a)\in C^1(\R)\cap C^\infty(\R\setminus\{0\})$ for each $a\in\R$ and $t>0$. We will take the transformation $v=\tau w$ and first find the Green's function of the transformed problem for $w$, which was the same strategy used to find the diffusion limit in the previous sections. Then, $w$ satisfies
\begin{equation}\label{5.2}
\begin{cases}
w_t=\gamma(y)w_{yy},\\
w(0,y)=\gamma(y)\delta(y-a).
\end{cases}
\end{equation}
We denote $W(t,y;a)$ by the Green's function that satisfies \eqref{5.2}.

\subsection{Explicit formula for Green's function}
We divide the problem into two cases. First, we consider the case where $a=0$ and find an explicit function $w(t,y)$ that solves
\begin{equation}\label{w0}
\left\{
\begin{aligned}
&w_t= \gamma(y) \,w_{yy},\\
&w(0,y)=\gamma(y) \,\delta(y).
\end{aligned}
\right.
\end{equation}
This is the most difficult case since the discontinuity of the coefficient $\gamma$ is at the origin and the Dirac delta distribution is also placed there. We use a technique used in \cite{010}. To begin, we divide the real line $\R$ into two regions, $\{ y>0 \}$ and $\{ y<0 \}$. In the region $\{y>0\}$, $\gamma=\frac{1}{2}$ and we solve an initial-boundary value problem:
\begin{equation}\label{w+}
\begin{cases}
w_t = \frac{1}{2} w_{yy} & \text{if $y > 0, ~t > 0$,} \\
w(0,y) = \frac{1}{2} f(y) & \text{if $y > 0$,} \\
w(t,0) = g(t) & \text{if $t > 0$,}
\end{cases}
\end{equation}
where the interface value $g(t)$ at $y=0$ is unknown and will be determined later. Note that we have replaced the initial data $\delta$ by an arbitrary function $f(y)$ for now. This will be replaced by a Dirac delta sequence and the unknown boundary value $g$ will be decided from a limiting process. One may find the solution of the initial-boundary value problem \eqref{w+} from \cite[pp.18--19 and p.22]{kevorkian}, which is
\[ \begin{split}
w(t,y) &= \frac{1}{\sqrt{2 \pi t}} \int_0^\infty \frac{1}{2} f(\xi) \big( e^{-(\xi-y)^2/2t} - e^{-(\xi+y)^2/2t} \big)\,d\xi \\
& \qquad + \int_0^t g'(\eta) \,\erfc\Big( \frac{y}{\sqrt{2(t-\eta)}} \Big) \,d\eta + g(0) \,\erfc\Big( \frac{y}{\sqrt{2t}} \Big) \quad \text{for\  $y > 0$.}
\end{split} \]
In the region $\{y<0\}$, $\gamma=1$ and we solve another initial-boundary value problem:
\begin{equation}\label{w-}
\begin{cases}
w_t = w_{yy} & \text{if $y<0, ~t > 0$,} \\
w(0,y) = f(y) & \text{if $y < 0$,} \\
w(t,0) = g(t) & \text{if $t > 0$.}
\end{cases}
\end{equation}
Note that, by choosing the same boundary value $g(t)$ in \eqref{w-}, the solution $w$ is forced to be continuous at the interface $y=0$. One can similarly find its solution
\[ \begin{split}
w(t,y) &= \frac{-1}{2\sqrt{\pi t}} \int_0^\infty f(-\xi) \big( e^{-(\xi-y)^2/4t} - e^{-(\xi+y)^2/4t} \big) \,d\xi \\
& \qquad + \int_0^t g'(\eta) \,\erfc\Big( \frac{-y}{2\sqrt{t-\eta}} \Big) \,d\eta + g(0) \,\erfc\Big( \frac{-y}{2\sqrt{t}} \Big) \quad \text{for\  $y < 0$.}
\end{split} \]

To determine $g(t)$, we use the assumption that the solution $w$ is $C^1(\R)$ in the $y$ variable, i.e.,
\[ w_y(t,0+) = w_y(t,0-) \qquad \text{for all $t > 0$.} \]
Since
\[w_y(t,y)=
\begin{cases}
\begin{split}
& \textstyle \frac{1}{2t \sqrt{2\pi t}} \int_0^\infty f(\xi) \big( (\xi-y) e^{-(\xi-y)^2/2t} + (\xi+y) e^{-(\xi+y)^2/2t} \big) \,d\xi \\
& \textstyle \qquad - \sqrt{\frac{2}{\pi}} \int_0^t g'(\eta) \, \frac{e^{-y^2/2(t-\eta)}}{\sqrt{t-\eta}} \,d\eta - \sqrt{\frac{2}{\pi t}} \,g(0) \,e^{-y^2/2t},\ \text{if}\ y>0,
\end{split}
&\\
\begin{split}
& \textstyle \frac{-1}{4t \sqrt{\pi t}} \int_0^\infty f(-\xi) \big( (\xi-y) e^{-(\xi-y)^2/4t} + (\xi+y) e^{-(\xi+y)^2/4t} \big) \,d\xi \\
& \textstyle \qquad + \frac{1}{\sqrt{\pi}} \int_0^t g'(\eta) \,\frac{e^{-y^2/4(t-\eta)}}{\sqrt{t-\eta}} \,d\eta + \frac{1}{\sqrt{\pi t}} \,g(0) \,e^{-y^2/4t},\ \text{if}\ y<0.
\end{split}
&
\end{cases}\]
the continuous differentiability of $w$ implies that
\[ \begin{split}
& \frac{1}{t \sqrt{2t}} \int_0^\infty f(\xi) \,\xi e^{-\xi^2/2t}\,d\xi + \frac{1}{2t \sqrt{t}} \int_0^\infty f(-\xi) \, \xi e^{-\xi^2/4t} \,d\xi \\
& \qquad\qquad = (\sqrt{2}+1) \int_0^t g'(\eta) \, \frac{1}{\sqrt{t-\eta}} \,d\eta + (\sqrt{2} + 1) \,\frac{g(0)}{\sqrt{t}} \quad \text{for all $t > 0$.}
\end{split} \]
This relation gives the boundary value $g(t)$ implicitly for any given initial value $f(y)$. For the Green's function case, the corresponding initial value is the Dirac delta distribution and we can find an explicit formula for $g(t)$. To do so, we take a Dirac delta sequence as the initial value: we set $f = \frac{n}{2} \,\chi_{[-1/n,1/n]}$ for a given positive integer $n$. Then, the implicit relation is written as
\[ \begin{split}
&\frac{n}{2\sqrt{2t}} (1 - e^{-\frac{1}{2n^2 t}}) + \frac{n}{2\sqrt{t}} (1 - e^{-\frac{1}{4n^2 t}}) \\
& \qquad\qquad = (\sqrt{2} + 1) \int_0^t g'(\eta) \,\frac{1}{\sqrt{t-\eta}} \,d\eta + (\sqrt{2} + 1) \, \frac{g(0)}{\sqrt{t}}.
\end{split} \]
Taking the Laplace transformation yields
\[ \begin{split}
& \frac{n}{2\sqrt{2}} \sqrt{\frac{\pi}{s}} (1 - e^{-\sqrt{2s}/n}) + \frac{n}{2} \sqrt{\frac{\pi}{s}} (1 - e^{-\sqrt{s}/n} ) \\
& \hskip 2cm = (\sqrt{2} + 1) (s \mathcal{G} (s) - g(0)) \sqrt{\frac{\pi}{s}} + (\sqrt{2} + 1) \,g(0) \sqrt{\frac{\pi}{s}}\ ,
\end{split} \]
where $\mathcal{G} (s)$ is the Laplace transformation of $g$. Therefore
\[ 
\mathcal{G} (s) = \frac{n}{2(\sqrt{2}+1) s} \Big( \frac{1}{\sqrt{2}} (1 - e^{-\sqrt{2s}/n}) + (1 - e^{-\sqrt{s}/n}) \Big),
\]
and the inverse Laplace transform gives
\[ g(t) = \frac{n}{2} (\sqrt{2} - 1) \Big( \frac{1}{\sqrt{2}} \erf\Big( \frac{\sqrt{2}}{2n\sqrt{t}} \Big) + \erf \Big( \frac{1}{2n \sqrt{t}} \Big) \Big). \]
The boundary condition $g(t)$ is obtained by taking the limit as $n \to \infty$:
\[ g(t) = \frac{\sqrt{2} - 1}{\sqrt{\pi t}}. \]
Therefore, the solution $w$ of (\ref{w0}) is given by
\[w(t,y)=\begin{cases}
\frac{\sqrt{2}-1}{\sqrt{2} \pi} \int_0^t \frac{y e^{-y^2/2(t-\tau)}}{\sqrt{\tau} (t-\tau)^{3/2}} \,d\tau = \frac{2 (\sqrt{2}-1)}{\pi} \int_{y/\sqrt{2t}}^\infty \frac{e^{-\eta^2}}{\sqrt{t - y^2/2\eta^2}} \,d\eta & \text{if $y > 0$,} \\
\frac{\sqrt{2}-1}{2\pi} \int_0^t \frac{(-y) e^{-y^2/4(t-\tau)}}{\sqrt{\tau} (t-\tau)^{3/2}} \,d\tau = \frac{2 (\sqrt{2}-1)}{\pi} \int_{-y/2\sqrt{t}}^\infty \frac{e^{-\eta^2}}{\sqrt{t-y^2/4\eta^2}}\,d\eta & \text{if $y < 0$.}
\end{cases}\]
Two different cases above can be combined using $\gamma(y)$: the Green's function for \eqref{5.2} when $a=0$ is
\begin{equation}\label{W0}
W(t,y;a=0)=\frac{2(\sqrt{2}-1)}{\pi} \int_{|y|/\sqrt{4\gamma(y)t}}^\infty \frac{e^{-\eta^2}}{\sqrt{t-y^2/4\gamma(y)\eta^2}}\,d\eta.
\end{equation}

Next, we consider the other case where $a\ne0$. The Green's function can be constructed in the same manner by dividing the spatial domain into two regions,  $\{y>0\}$ and $\{y<0\}$. For the case when $a > 0$, the corresponding boundary value is
\[
g(t)= \frac{\sqrt{2} - 1}{\sqrt{\pi t}} \,e^{-a^2/2t}.
\]
Hence, the Green's function of \eqref{5.2} for $a>0$ is given by
\begin{equation}\label{W+}
W(t,y;a>0)=
\begin{cases}
\begin{split}
& \textstyle \frac{2(\sqrt{2}-1)}{\pi} \int_{|y|/\sqrt{2 t}}^\infty \exp\Big(- \frac{a^2/2}{t - y^2/2\eta^2}\Big)  \, \frac{e^{-\eta^2}}{\sqrt{t - y^2/2\eta^2}} \,d\eta\\
& \textstyle \qquad + \frac{1}{2\sqrt{2\pi t}} \big( e^{-(y-a)^2/2t} - e^{-(y+a)^2/2t} \big)
\end{split}
& \text{if\  $y>0$,} \\
\frac{2(\sqrt{2}-1)}{\pi} \int_{|y|/2\sqrt{t}}^\infty \exp\Big( -\frac{a^2/2}{t-y^2/4\eta^2} \Big) \, \frac{e^{-\eta^2}}{\sqrt{t - y^2/4\eta^2}} \,d\eta & \text{if\  $y < 0$.}
\end{cases}
\end{equation}
For the case where $a < 0$, we have
\[ g(t) = \frac{\sqrt{2} - 1}{\sqrt{\pi t}} \,e^{-a^2/4t} \]
and
\begin{equation}\label{W-}
W(t,y;a<0)=\begin{cases}
\frac{2(\sqrt{2}-1)}{\pi} \int_{|y|/\sqrt{2t}}^\infty \exp\Big( -\frac{a^2/4}{t-y^2/2\eta^2} \Big) \, \frac{e^{-\eta^2}}{\sqrt{t - y^2/2\eta^2}} \,d\eta & \text{if\  $y > 0$,} \\
\begin{split}
& \textstyle \frac{2(\sqrt{2}-1)}{\pi} \int_{|y|/2\sqrt{t}}^\infty \exp\Big( - \frac{a^2/4}{t - y^2/4\eta^2} \Big) \, \frac{e^{-\eta^2}}{\sqrt{t - y^2/4\eta^2}} \,d\eta \\
& \textstyle \quad + \frac{1}{2\sqrt{\pi t}} \big( e^{-(y-a)^2/4t} - e^{-(y+a)^2/4t} \big)
\end{split}
& \text{if\  $y < 0$.}
\end{cases}
\end{equation}
Note that the formulas for the two cases where $a>0$ and $a<0$ are almost identical; especially, the integrands for the case where $a < 0$ is achieved just by simply replacing `$a^2/2$' in the exponent for the case where $a > 0$ by `$a^2/4$'. 

The Green's function $G$ for the problem \eqref{GV} is obtained by multiplying $\frac{1}{\gamma}$ to the Green's function $W$ of \eqref{5.2}, i.e.,
\begin{equation}\label{G}
G(t,x;a)=\frac{1}{\gamma}W(t,x;a),
\end{equation}
where $W$ is given by \eqref{W0}, \eqref{W+}, and \eqref{W-}, and $\gamma$ is by \eqref{mu}.

\subsection{Numerical comparison with the random walk}

We compare the probability density function of the discrete random walk system \eqref{RW1} and the explicit Green's function in \eqref{G} to see how similar they are. The Green's function given in \eqref{G} is for the equation \eqref{GV}. If the equation is given by
\begin{equation}\label{GVD}
v_t=D \Big( \frac{1}{\tau}v \Big)_{xx}, \qquad v(0,x)=\delta(x-a),
\end{equation}
for a given constant $D>0$, the solution of \eqref{GV} at $t=t_0$ is the same as the solution of \eqref{GVD} at $t=t_0/D$.

\begin{figure}[ht]
\small
\centering
\begin{minipage}[t]{0.49\textwidth}
 \centering
 \includegraphics[width=\textwidth]{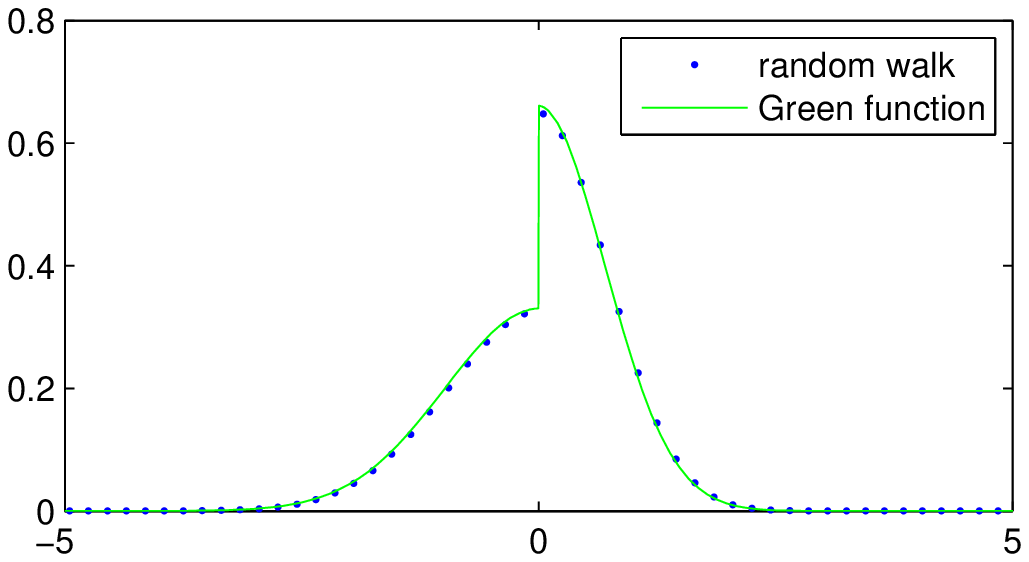}
(a) $\tri x=0.1$
\end{minipage}
\begin{minipage}[t]{0.49\textwidth}
\centering
 \includegraphics[width=\textwidth]{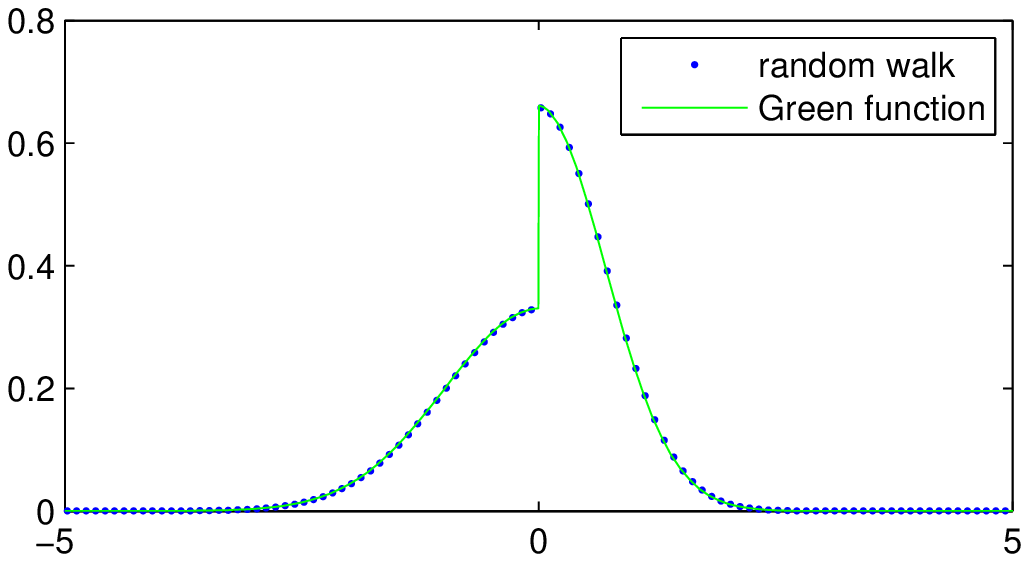}
(b) $\tri x=0.05$
\end{minipage}
\caption{\small{\bf Green's function and random walk.} Probability distributions of the random walk (dots) models and the explicit solution $G(t=0.5,x;a=0)$ in \eqref{G} (solid line).}\label{fig1}
\end{figure}

In Figure \ref{fig1}(a), the probability distribution of the discrete random walk system starting with the initial value $p_0^j=\delta_{0j}$ (Kronecker delta) is given when the space mesh width is $\tri x=0.1$ and the number of time steps is $n=100$. Here the diffusion limit as $\eps \to 0$ is considered with
\[
\tri t=\eps^2\tau,\quad \tri x=\eps\ell,
\]
where $\ell=0.1$ and $\tau$ is the one defined in \eqref{1.1}.
Hence, after taking the diffusion limit, we obtain the diffusion equation \eqref{GVD} with $D=\frac{\ell^2}{2}=0.005$. Hence, the $100$th time step  corresponds to the time $t_0=100\times0.005=0.5$. In Figure \ref{fig1}(a), Green's function $G(t=0.5,x;a=0)$ is given in a solid line. We can see that they match.

In Figure \ref{fig1}(b), the probability distribution of the discrete random walk system is given when the space mesh width is $\tri x=0.05$ and $n=400$. Similarly, after taking the diffusion limit, we obtain the diffusion equation \eqref{GVD} with $D=0.00125$. Hence, the 400th time step corresponds to the time $t_0=400\times0.00125=0.5$ again. In Figure \ref{fig1}(b), we can see that they match.

\section{Monte Carlo simulation}\label{sect.6}

In this section, we present three Monte Carlo simulation results that illustrate the properties of a random walk system with a nonconstant sojourn time. The computation codes for the three examples are given in the Appendix. Figure \ref{fig2} shows the result of a Monte Carlo simulation where the particle distribution corresponds to the distribution of the Green's function at $t=0.5$. The walk length $\tri x$ takes the normal distribution with a deviation of $0.1$. Then, the diffusivity in the region where $\tri t=1$ is
\begin{equation}\label{6.D}
D=\frac{|\tri x|^2}{2}=0.005.
\end{equation}
The time scale $t=0.5$ of the Green's function is the one when the diffusivity is $1$. Hence, if the diffusivity is $D=0.005$, the corresponding time is
\begin{equation}\label{6.T}
T=\frac{t}{D}=100.
\end{equation}
For the simulation, we let $10^6$ particles walk a maximum of 100 times each. 
If a particle walks in the region $x>0$, one walk is counted as two. So if a particle is always in the region $x>0$, it will walk 50 times. The distribution of the final positions of the $10^6$ particles is plotted in Figure \ref{fig2}. We can see an almost exact match with the graphs in Figure \ref{fig1}. In other words, the Green's function \eqref{G} and hence the diffusion equation \eqref{ux1} correctly provide the evolution of the random walk with a heterogeneous sojourn time given in \eqref{tau(x)}.

\begin{figure}[ht]
\small
\centering
 \includegraphics[width=0.7\textwidth]{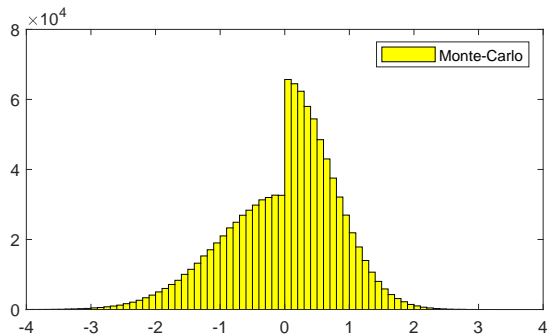}
\caption{\small{\bf Histogram of particle positions.} We let $10^6$ particles walk for $400$ steps starting from the origin with walk length $\tri x=0.05$. This histogram of the final particle positions shows the agreement with the Green's function.} \label{fig2}
\end{figure}

The explicit formula of the Green's function \eqref{G} allows us to compare Monte Carlo simulations and the exact solution when all particles start from a single position. Another way to validate the model equation \eqref{ux1} is to check that it has a valid steady-state solution. We can see that the steady state of the differential equation \eqref{ux1} is proportional to the sojourn time, i.e., there exists a constant $C>0$ such that
\[
u(x,t)\to C\tau(x)\quad\text{as}\quad t\to\infty.
\]
Since the initial total mass is preserved, the constant $C$ satisfies
\begin{equation}\label{6.C}
C=\frac{\int u_0(x)dx}{\int\tau(x)dx} .
\end{equation}
In the convergence proof of this paper, we have considered a case where the domain is divided into two parts $\{x<0\}$ and $\{x>0\}$ and the sojourn time is $\tri t=1$ and $\tri t=2$, respectively. However, the model equation itself is general and can be used for a general situation. For example, we may take a bounded domain with an appropriate boundary conditions or a general function $\tau(x)$ as the sojourn time. One of the simplest ways to test the convergence to a steady state is to take a bounded domain with a periodic boundary condition. Figure \ref{fig3} shows a Monte Carlo simulation is given with the sojourn time
\begin{equation}\label{6.1}
\tau(x)=\begin{cases}
1,&\text{ if } 0<x<1,\\
2,&\text{ if } 1<x<2,\\
3,&\text{ if } 2<x<3,\\
4,&\text{ if } 3<x<4,
\end{cases}
\end{equation}
on the interval $\Omega=(0,4)$ and with the periodic boundary condition. A total number of $10^6$ particles are used in the simulation. The initial distribution is taken with a random distribution. The walk length $\tri x$ is chosen from the normal distribution with a deviation of $0.1$. The particle distributions are given at four moments, $t=0, 0.2, 1$, and $5$. The actual number of walks is given as above after calculating the diffusivity. We can see that the histogram converges to a distribution proportional to the sojourn time $\tau(x)$.

\begin{figure}[ht]
\small
\centering
 \includegraphics[width=0.33\textwidth]{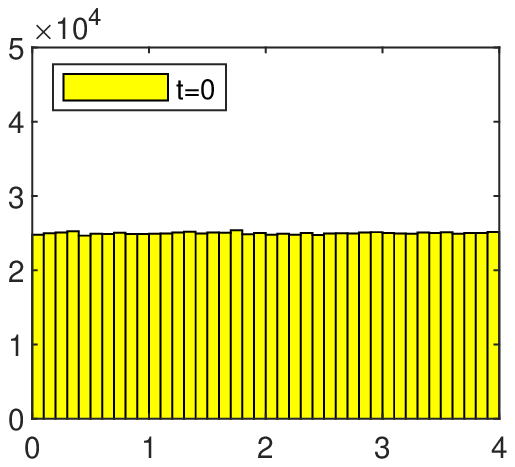}
 \includegraphics[width=0.33\textwidth]{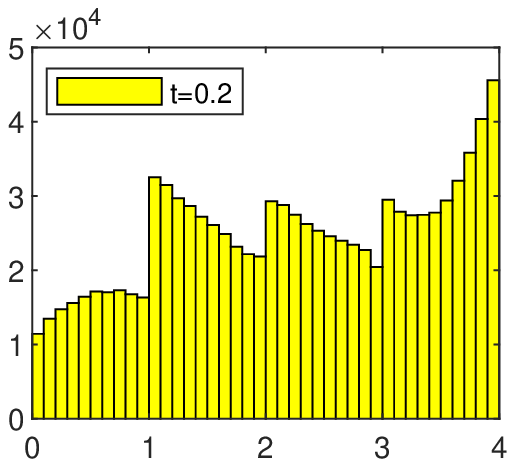}\\
 \includegraphics[width=0.33\textwidth]{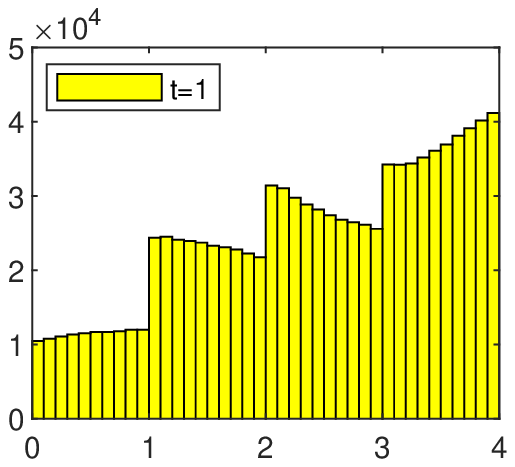}
 \includegraphics[width=0.33\textwidth]{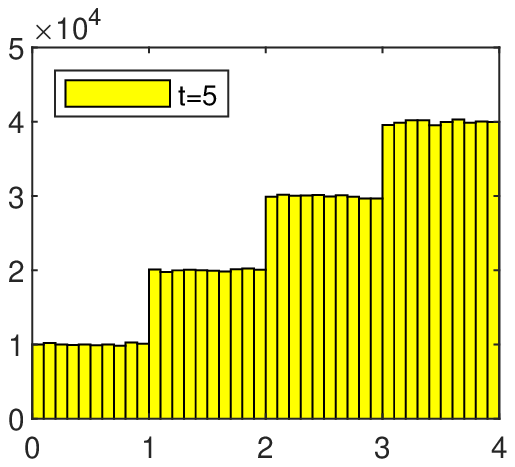}
\caption{\small{\bf Histogram and steady states.} Snapshots of the particle distribution when $\tri x=0.1$ and the sojourn time $\tri t=\tau$ is given by \eqref{6.1}. The steady state is proportional to $\tau(x)$. }\label{fig3}
\end{figure}

The main difference with the walk length $\tri x$ is that the sojourn time $\tri t$ is independent of the choice of the reference point.
In Figure \ref{fig4}, three different ways of taking a reference point are tested and the same steady states are obtained. For this test, we take a bounded domain $\Omega=(0,2\pi)$ with the periodic boundary condition and the sojourn time as
\begin{equation}\label{6.2}
\tau(x)=1+0.5\sin(x).
\end{equation}
The reference point for the spatial heterogeneity in $\tau$ is taken according to \eqref{reference_t}, i.e., when a particle jumps from $x$ to $y$,
\begin{equation}\label{6.3}
\tri t=\tau(by+(1-b)x),\quad b\in[0,1].
\end{equation}
For Monte Carlo simulations, a total number of $10^6$ particles are used. The initial distribution is taken with a random distribution as above. The walk length $\tri x$ follows the normal distribution with a deviation of $0.2$. The particle distributions at $t=5$ are shown in Figure \ref{fig4}. The actual number of walks are given by the same relations of \eqref{6.D} and \eqref{6.T}. We can see that the histogram converges to a distribution proportional to the sojourn time $\tau(x)$ in \eqref{6.2}. We tested three cases where $b=0,0.5$ and $1$. We can see that all of them give the same steady state which is simply $C\tau(x)$ where $C$ is given by \eqref{6.C}. In fact, since the graph is a histogram with a bin size of $0.2$, we made the sine function graph in Figure \ref{fig4} with $C$ after multiplying the bin size. However, for the case of walk length and departing rate, the obtained steady states are all different depending on the choice of the reference points (see \cite[Section 7]{006}).

\begin{figure}[ht]
\small
\centering
 \includegraphics[width=0.32\textwidth]{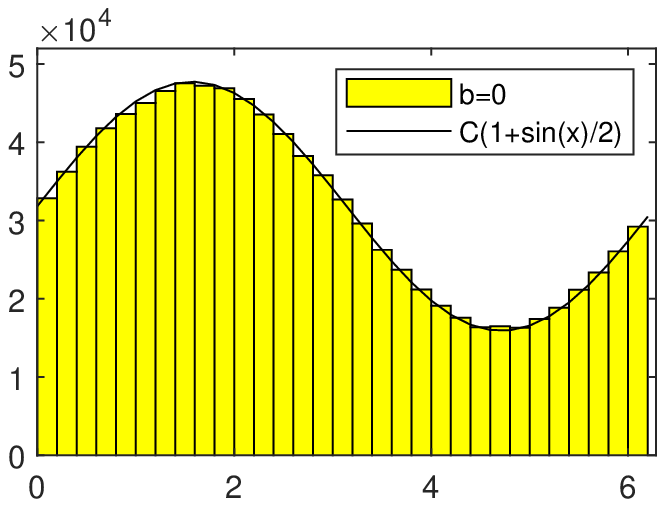}
 \includegraphics[width=0.32\textwidth]{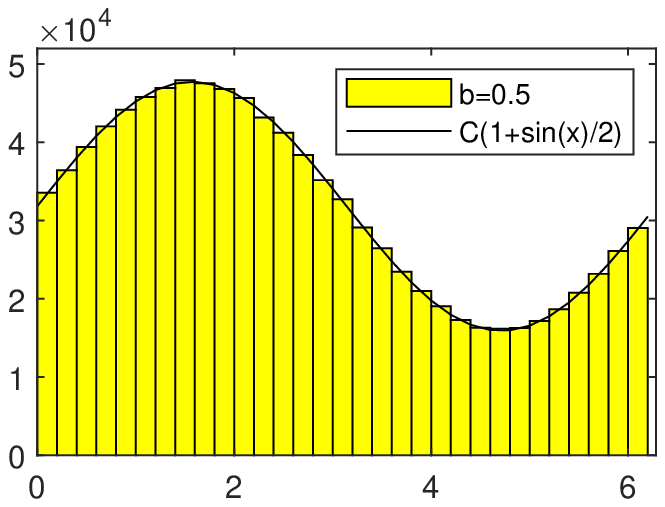}
 \includegraphics[width=0.32\textwidth]{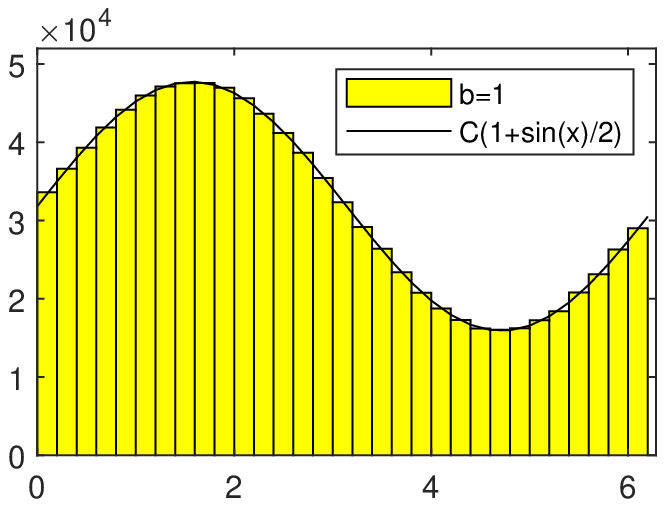}
\caption{\small{\bf Histogram and steady states.} The sojourn time $\tau(x)$ is given by \eqref{6.2} and $\tri t$ is taken as \eqref{6.3}. The steady states are computed. They are independent of the reference point.}\label{fig4}
\end{figure}

\subsection*{Acknowledgement} Jaywan Chung was supported by the National Research Foundation of Korea (No. 2021M3D1A2043845). Yong-Jung Kim was supported in part by the National Research Foundation of Korea (No. 2022R1H1A2092302). Min-Gi Lee was supported by the National Research Foundation of Korea(NRF) grant funded by the Korea government(MSIT) (No. 2020R1A4A1018190, 2021R1C1C1011867).

\appendix

\section{MATLAB codes for Monte Carlo simulations}

The codes for the three Monte Carlo simulations in the paper are given below. The simulations are performed using MATLAB.

\subsection{Code for Figure \ref{fig2}:}
{\small
\begin{verbatim}
%% parameters
N=1000000;      % number of particles
t=0.5;          % time for Green's function scale
dx=0.05;        % step size
%% variables
T=2*t/dx/dx;    % maximum number of steps DT=t, D=dx^2/2
X=zeros(1,N);   % Position of N particles
%% computations
for i=1:N
  t=T;
  while(t>0)
    X(i)=X(i)+randn()*dx;   % normal distribution with deviation dx
    t=t-(sign(X(i))+3)/2;   % sojourn time taken from final position
  end
end
%% display
x=-5:0.1:5;
histogram(X,x,'FaceColor','y');
legend('Monte-Carlo');
axis([-5 5 0 80000]);
\end{verbatim}
}

\subsection{Code for Figure \ref{fig3}:}
{\small
\begin{verbatim}
%% parameters
N=1000000;      % number of particles
L=4;            % domain is [0 L] with periodic BC
t=1;            % time for steady state
dx=.1;         % step size
%% variables
T=2*t/dx/dx;    % maximum number of steps DT=t, D=dx^2/2
X=rand(1,N)*L;  % Position of N particles
%% computations
for i=1:N
  t=T;
  while(t>0)
    X(i)=X(i)+randn()*dx;  % normal distribution with deviation dx
    X(i)=mod(X(i),L);      % periodic condition
    t=t-ceil(X(i));        % sojourn time taken from final position
  end
end
%% display
x=0:0.1:L;
histogram(X,x,'FaceColor','y','FaceAlpha',1);
legend('t=1'); axis([0 L 0 50000]);
\end{verbatim}
}

\subsection{Code for Figure \ref{fig4}:}
{\small
\begin{verbatim}
%% parameters
N=1000000;      % number of particles
L=2*pi;            % domain is [0 L] with periodic BC
t=5;            % time for steady state
dx=0.2;         % step size
b=1;
%% variables
T=2*t/dx/dx;    % maximum number of steps DT=t, D=dx^2/2
X=rand(1,N)*L;  % Position of N particles
%% computations
for i=1:N
    t=T;
    while(t>0)
        Y=X(i)+randn()*dx;
        t=t-(1+0.5*sin(b*Y+(1-b)*X(i)));
        X(i)=Y;
    end
end
X=mod(X,L);     % periodic boundary condition
%% display
x=0:dx:L;C=dx*N/L;
histogram(X,x,'FaceColor','y','FaceAlpha',1);hold on;
plot(x,C*(sin(x)/2+1),'-k');
legend('b=1','C(1+sin(x)/2)'); axis([0 L 0 26000]);
\end{verbatim}
}

\end{document}